\documentclass{article}

\usepackage{arxiv}

\usepackage[utf8]{inputenc} 
\usepackage[T1]{fontenc}    
\usepackage{hyperref}       
\usepackage{url}            
\usepackage{booktabs}       
\usepackage{amsfonts}       
\usepackage{nicefrac}       
\usepackage{microtype}      
\usepackage{lipsum}
\usepackage{graphicx}
\usepackage{subfigure}
\usepackage{amsmath}
\usepackage{amsthm}
\usepackage{algorithm}
\usepackage[noend]{algpseudocode}
\usepackage[dvipsnames]{xcolor}
\usepackage{enumerate}
\usepackage{comment}

\DeclareMathOperator*{\argmin}{argmin}

\newtheorem{thm}{Theorem}
\newtheorem{lem}{Lemma}
\newtheorem{cor}{Corollary}

\newcommand{\M}{\mathcal{M}}

\title{Minimum Number of Bends of Paths of Trees in a Grid Embedding}

\author{
  Vitor T. F. de Luca\\
  Universidade do Estado do Rio de Janeiro\\
  Rio de Janeiro, Brazil\\
  \texttt{toccivitor8@gmail.com} \\
   \And
 Fabiano S. Oliveira \\
  Universidade do Estado do Rio de Janeiro\\
  Rio de Janeiro, Brazil\\
  \texttt{fabiano.oliveira@ime.uerj.br} \\
  \AND
  Jayme L. Szwarcfiter \\
  Universidade Federal do Rio de Janeiro \hspace{0.5cm} Universidade do Estado do Rio de Janeiro \\
  Rio de Janeiro, Brazil \\
  \texttt{jayme@nce.ufrj.br} \\
}

\begin{document}
\maketitle

\begin{abstract}
We are interested in embedding trees $T$ with $\Delta(T) \leq 4$ in a rectangular grid, such that the vertices of $T$ correspond to grid points, while edges of $T$ correspond to non-intersecting straight segments of the grid lines. Such embeddings are called straight models. While each edge is represented by a straight segment, a path of $T$ is represented in the model by the union of the segments corresponding to its edges, which may consist of a path in the model having several bends. The aim is to determine a straight model of a given tree $T$ minimizing the maximum number of bends over all paths of $T$. We provide a quadratic-time algorithm for this problem. We also show how to construct straight models that have $k$ as its minimum number of bends and with the least number of vertices possible. As an application of our algorithm, we provide an upper bound on the number of bends of EPG models of VPT $\cap$ EPT graphs.
\end{abstract}

\keywords{Grid embedding\and Number of bends\and EPG models}

\section{Introduction}\label{intro}
The problem of \emph{grid embedding of a graph} $G$ is that of drawing $G$ onto a rectangular two-dimensional grid (called simply \emph{grid}) such that each vertex $v \in V(G)$ corresponds to a grid point (an intersection of a horizontal and a vertical grid line) and the edges of $G$ correspond to  paths of the grid. Grid embedding of graphs has been considered with different perspectives \cite{aggarwal1985multi,aggarwal1991multilayer,beck2020puzzling,liu1998linear,schnyder1990embedding,tamassia1989planar}. In \cite{aggarwal1985multi}, the authors described an algorithm for embedding planar graphs and showed that the resulting embedding has edges with at most $6$ bends. The same authors later showed in \cite{aggarwal1991multilayer} a simpler approach which provides an embedding in which each edge has at most $4$ bends. Embeddings having edges with at most $4$ bends were also considered in \cite{tamassia1989planar}, where a linear-time algorithm is presented.
In \cite{liu1998linear}, linear-time algorithms were described for embedding planar graphs such that the resulting embedding is guaranteed to have edges with at most $2$ bends, with the exception of the octahedron, for which at most two edges with $3$ bends may be produced. In this same paper, the authors also provided an upper bound on the total number of bends of their embeddings. Note that, all of these results are related to the problem of finding embeddings of graphs in which the aim is to minimize the number of bends of the edges of the graph and/or the total number of bends.

In this paper, we introduce a similar problem: given a planar graph~$G$, find a grid embedding of $G$ in which the edges of $G$ correspond to pairwise non-intersecting  paths of the grid, each one having no bends, such that the maximum number of bends, over all paths of $G$, is minimized. So, this new problem shifts the focus on the number of bends of edges of $G$ to the number of bends of paths of $G$. This problem has not yet been considered in the literature so far. For instance, the tree in Figure~\ref{fig: M_1} is drawn in such a way that there is a path having $5$ bends (the path joining $o$ and $m$), and $5$ is the maximum number of bends in that drawing. However, this maximum number of bends can be decreased to $3$ (path joining $e$ and $f$), as Figure~\ref{fig: M_2} illustrates.
\begin{figure}[htb]

\center
\subfigure[][]{\includegraphics[scale=0.6]{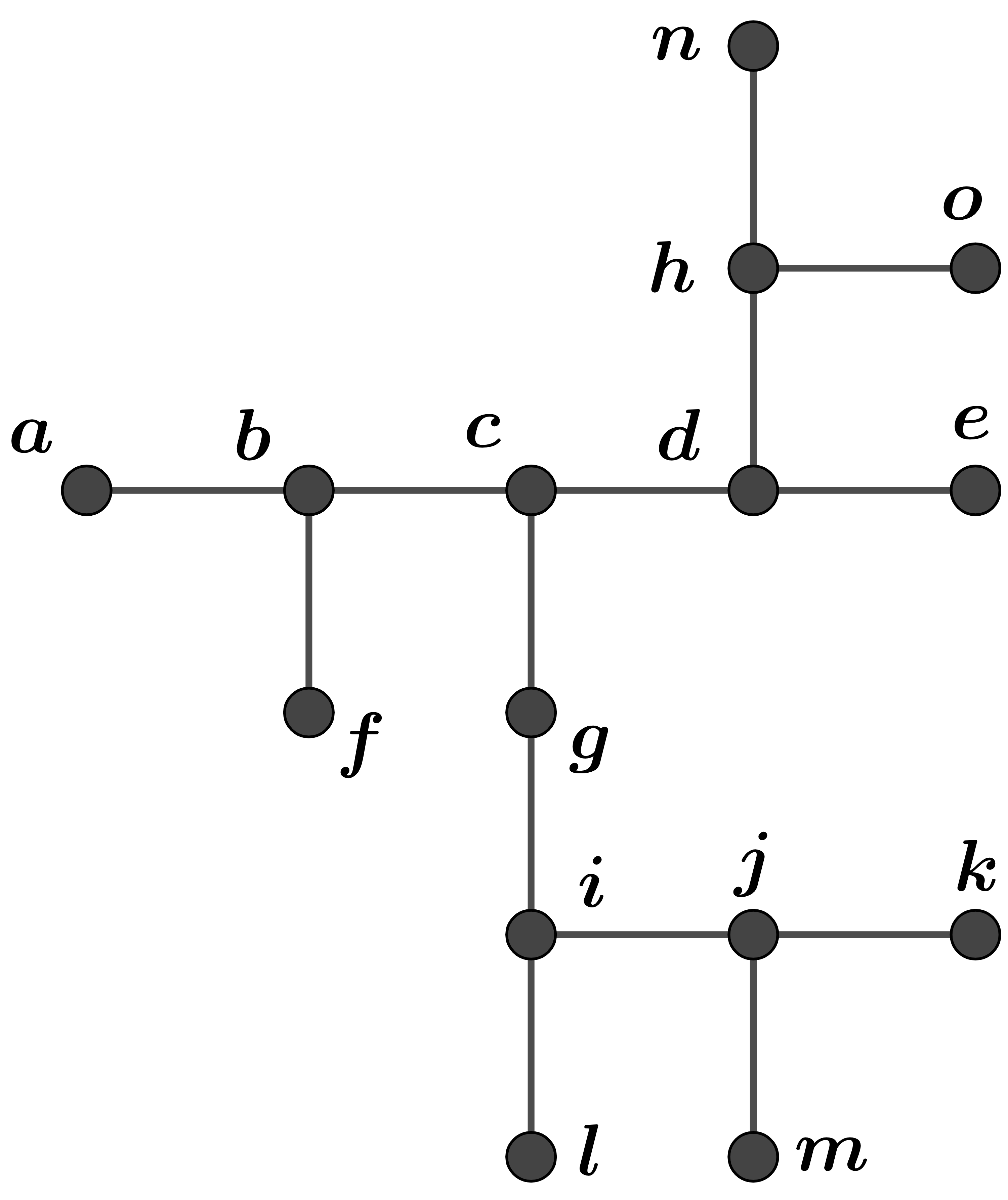} \label{fig: M_1}}
\qquad
\subfigure[][]{\includegraphics[scale=0.6]{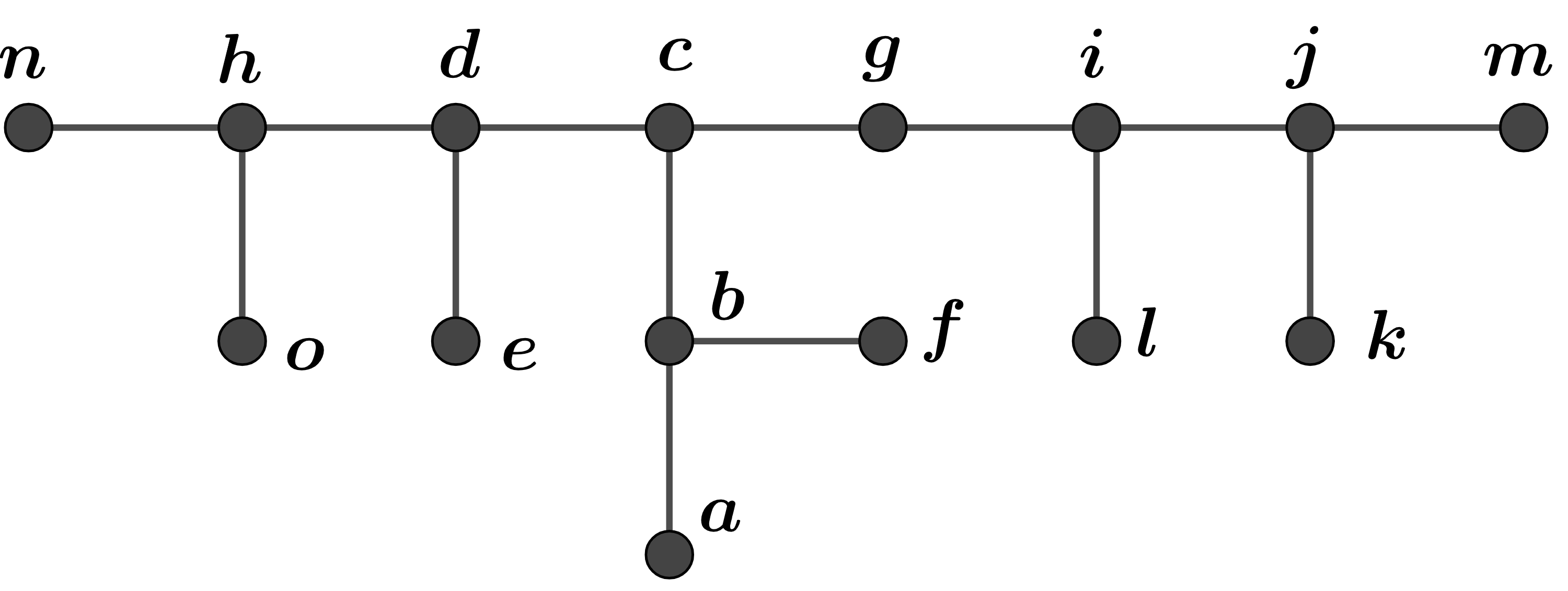} \label{fig: M_2}}
\caption{Two possible models $\mathcal{M}_1$ (left) and $\mathcal{M}_2$ (right) of the same tree $T$.}
\label{fig: Models of T}

\end{figure}

This problem will find an application obtaining certain grid models of VPT $\cap$ EPT graphs We will define these classes of graphs next, and the application will be deferred to Section~\ref{sec: EPG representations of VPT}. Given a tree $T$, called \emph{host tree}, and a set $\mathcal{P}$ of paths in $T$, the \emph{vertex (resp. edge) intersection graph of paths in a tree} (VPT (resp. EPT)) of $\mathcal{P}$ is the graph denoted by VPT($\mathcal{P}$) (resp. EPT($\mathcal{P}$)) having $\mathcal{P}$ as vertex set and two vertices are adjacent if the corresponding paths have in common at least one vertex (resp. edge). We say that $\langle T, \mathcal{P} \rangle$ is a \emph{VPT (resp. EPT) model} of $G$. Figure~\ref{fig: VPT_EPT: a} shows a host tree and the family of paths $\mathcal{P}=\{A, B, C, D\}$ in it, as well as the corresponding VPT($\mathcal{P}$) and EPT($\mathcal{P}$) graphs in Figures~\ref{fig: VPT_EPT: b} and~\ref{fig: VPT_EPT: c} respectively.
\begin{figure}[htb]

\center
\subfigure[][]{\includegraphics[scale=0.3]{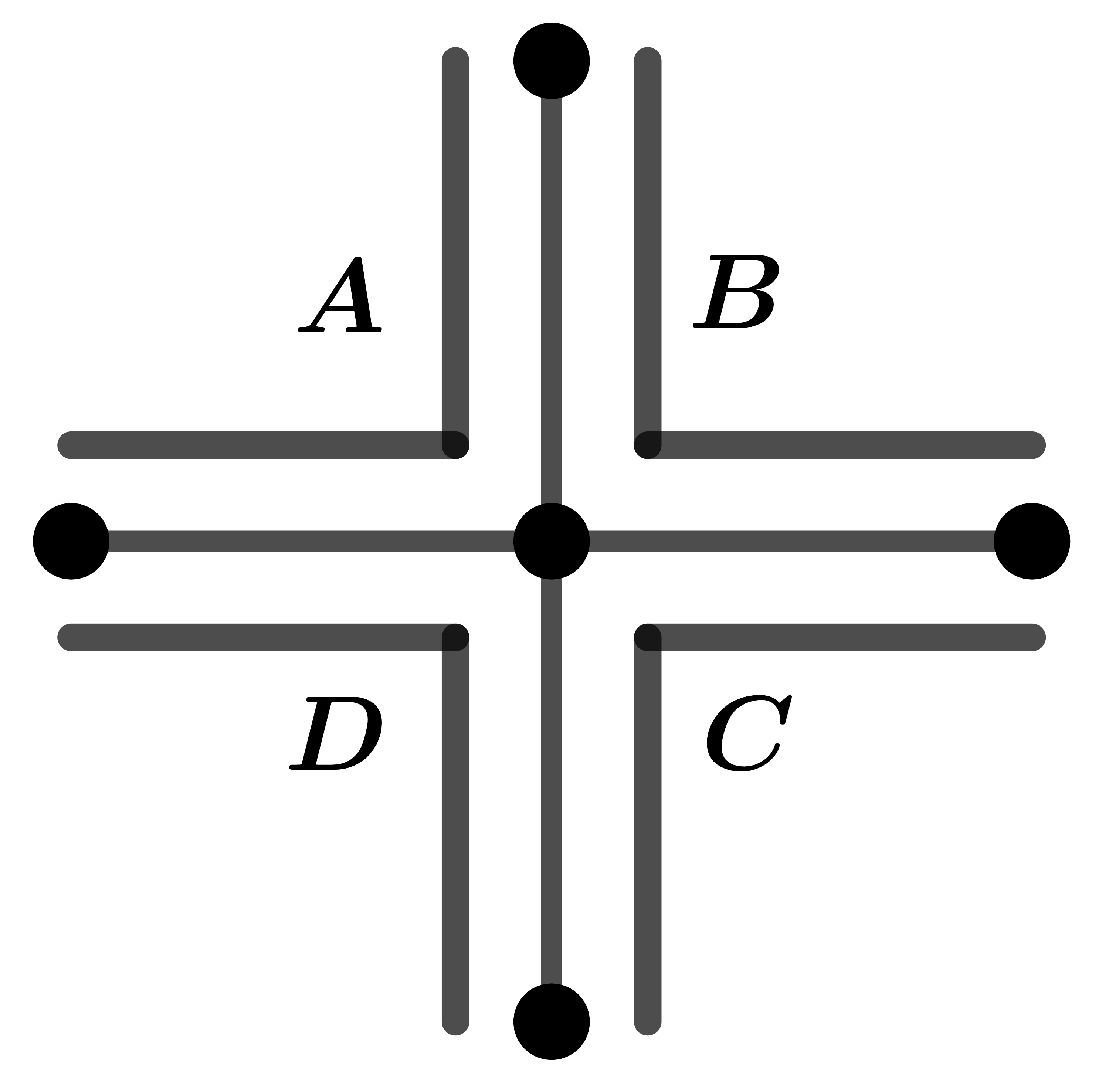} \label{fig: VPT_EPT: a}}
\hspace{15mm}
\subfigure[][]{\includegraphics[scale=0.3]{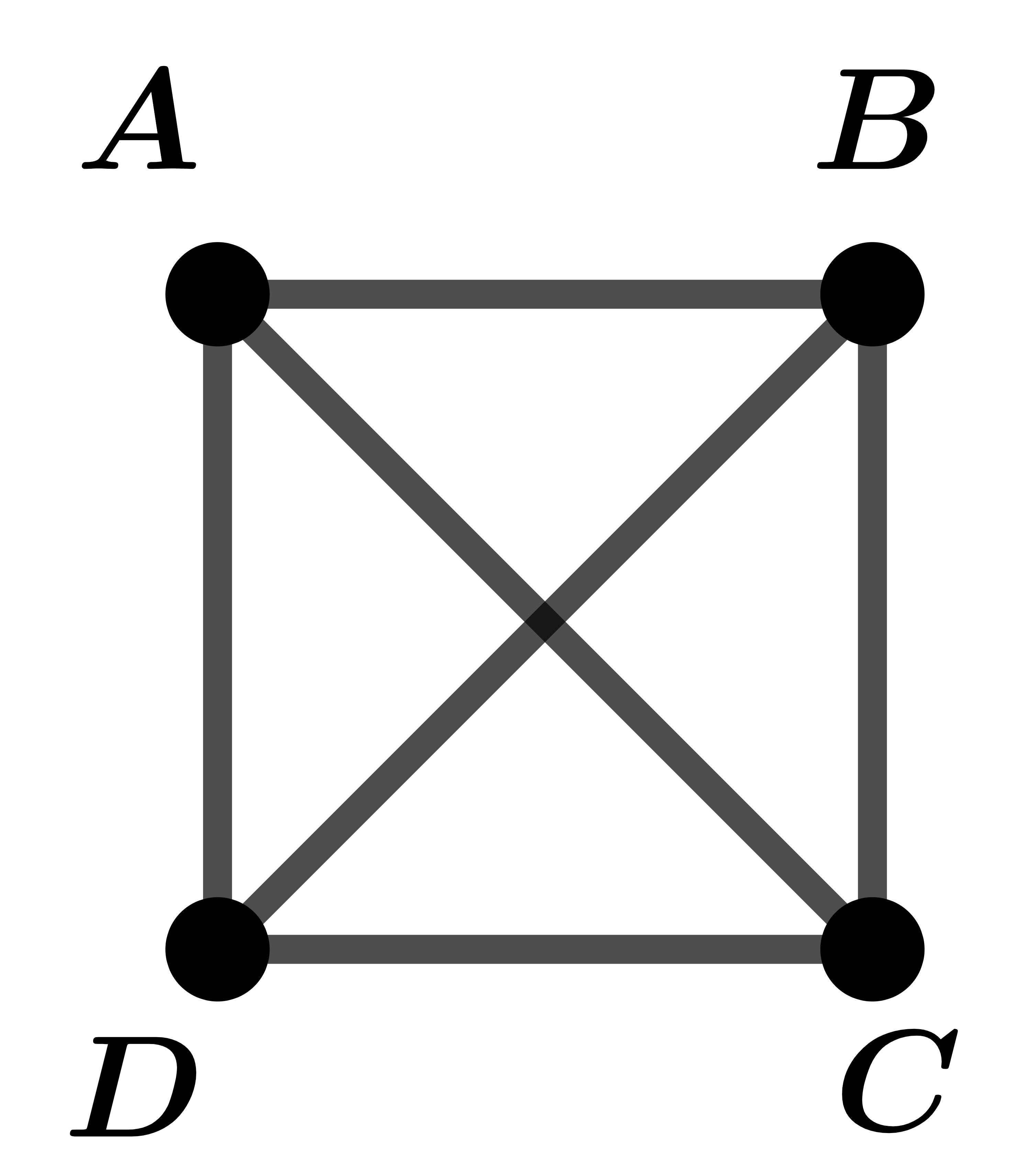} \label{fig: VPT_EPT: b}}
\hspace{15mm}
\subfigure[][]{\includegraphics[scale=0.3]{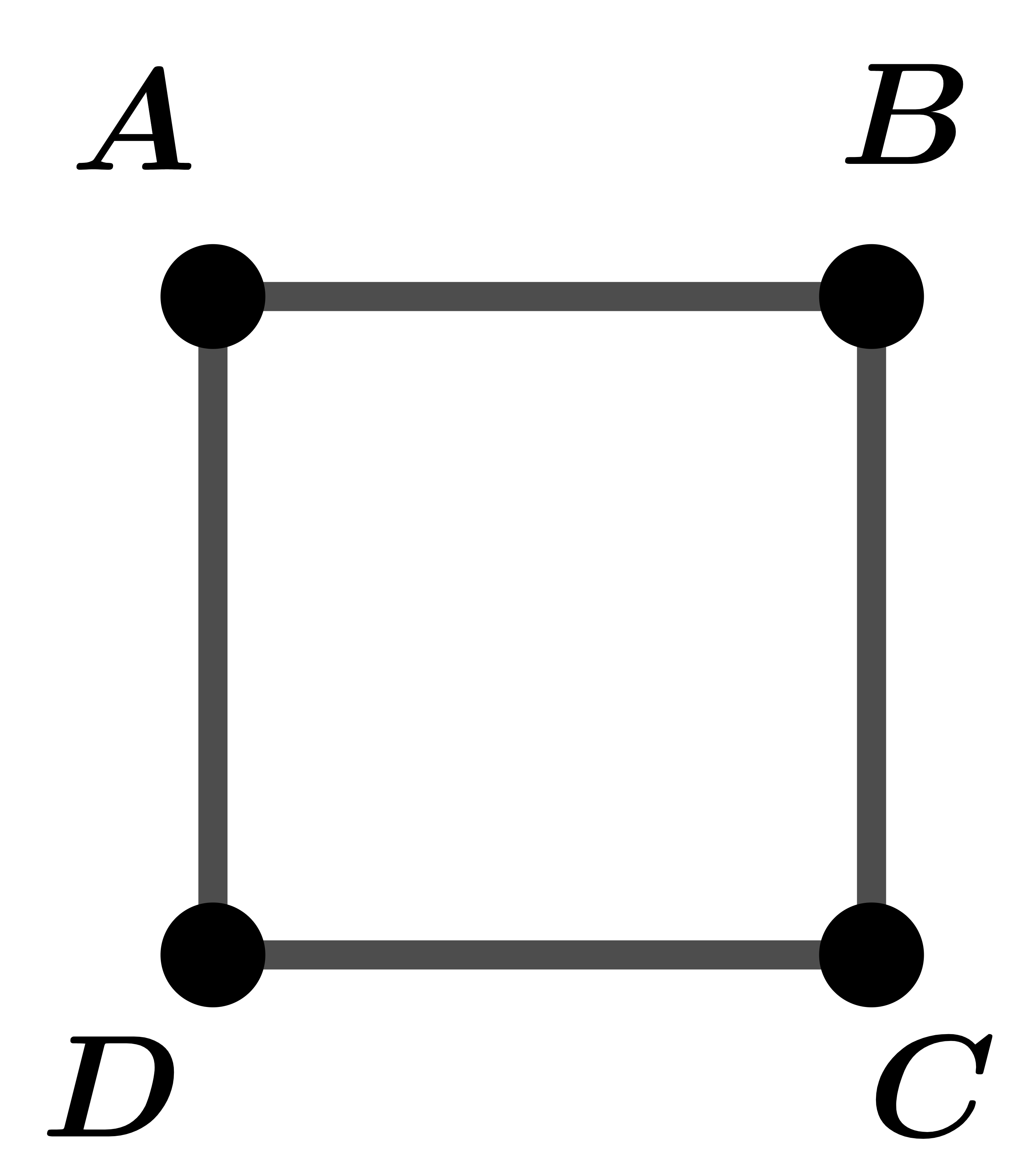} \label{fig: VPT_EPT: c}}
\hspace{15mm}
\subfigure[][]{\includegraphics[scale=0.3]{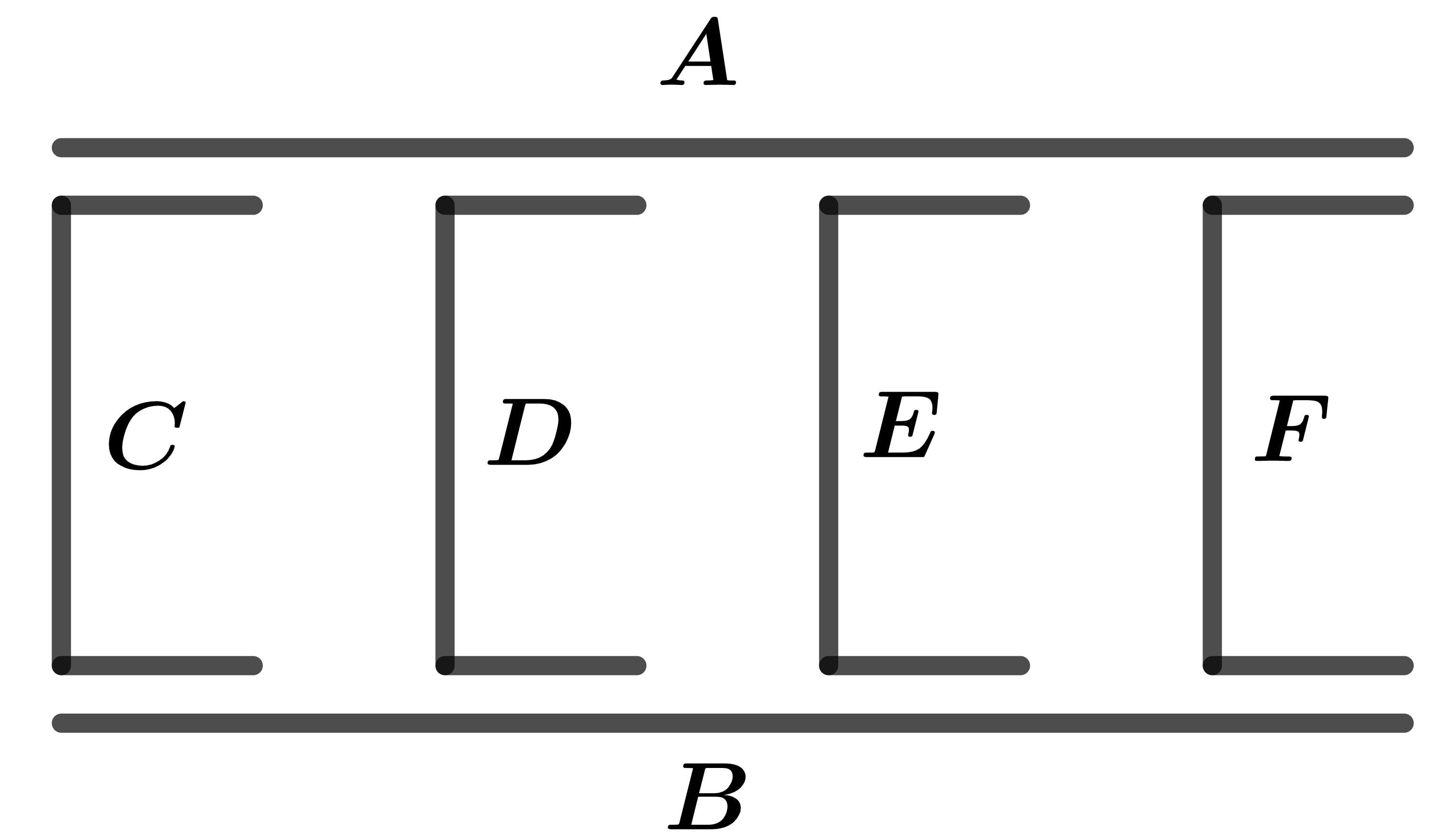} \label{fig: VPT_EPT: d}}
\hspace{15mm}
\subfigure[][]{\includegraphics[scale=0.3]{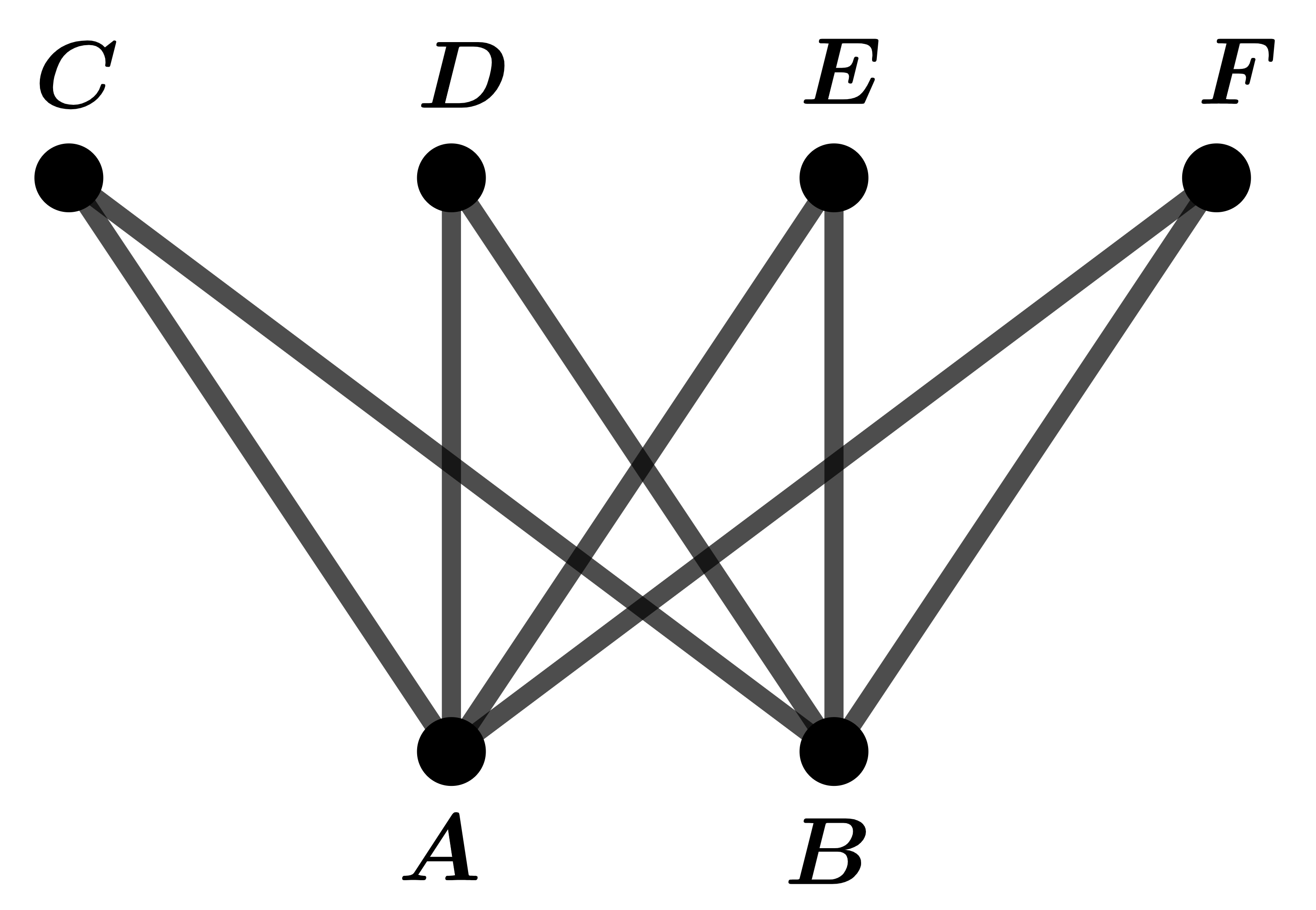} \label{fig: VPT_EPT: e}}
\caption{(a) Family of paths $\mathcal{P}=\{A, B, C, D\}$; (b) The corresponding VPT($\mathcal{P}$) and (c) EPT($\mathcal{P}$) graphs; (d) A B$_{2}$-EPG model $\mathcal{Q}$ and (e) its corresponding EPG graph.} 
\label{fig: VPT_EPT}

\end{figure}

Instead of host trees, the graphs we are interested have a grid $\mathcal{G}$ as the underlying structure from which a family of paths $\mathcal{P}$ is taken. A graph $G$ is an \emph{edge intersection graph of paths of a grid} if there is a collection of paths $\mathcal{P}$ in a grid $\mathcal{G}$ such that each vertex in $G$ corresponds to a member of $\mathcal{P}$ and two vertices are adjacent if, and only if, the corresponding paths have in common at least one edge of the grid. We say that $\mathcal{P}$ is an \emph{EPG model} of $G$. EPG graphs were first introduced by Golumbic, Lipshteyn and Stern in 2009 (c.f. \cite{chung201950}) motivated from circuit layout problems. The same authors showed that every graph is an EPG graph. Figure~\ref{fig: VPT_EPT: e} illustrates the EPG graph corresponding to the family of paths presented in Figure~\ref{fig: VPT_EPT: d}.

A turn of a path at a grid point is called a \emph{bend} and the grid point in which a bend occurs is called a \emph{bend point}. An EPG model is a \emph{B$_{k}$-EPG model} if each path has at most $k$ bends. A graph that admits a B$_{k}$-EPG model is called B$_{k}$-EPG. Therefore, the graph $G$ defined in Figure~\ref{fig: VPT_EPT: e} is B$_{2}$-EPG, as the model shows. However, it is possible to show that there is a B$_{1}$-EPG model of $G$ and, thus, $G$ is also B$_{1}$-EPG. The problem of finding the least $k$ such that a given graph is B$_{k}$-EPG has been a question of great interest since EPG graphs were introduced. In particular, the B$_{1}$-EPG graphs have been widely studied. On the other hand, there are fewer results for graphs that are B$_{k}$-EPG, for $k\geq 2$. It was proved that the recognition problem for B$_1$-EPG is NP-complete and it was also proved that the recognition problem for B$_2$-EPG is also NP-complete. It is the standing conjecture that determining the least $k$ such that an arbitrary graph is B$_{k}$-EPG is NP-complete for general $k$. References to all mentioned results can be found in~\cite{chung201950}.

In Section~\ref{sec:embedding trees}, we discuss the concept of straight model of trees. It is shown an algorithm that determines one such model in which the maximum number of bends over all paths of a tree is minimized. In Section~\ref{sec: EPG representations of VPT}, we employ our algorithm to obtain straight models of host trees that are used to build EPG models of VPT $\cap$ EPT graphs. An upper bound on the number of bends of EPG models of VPT $\cap$ EPT graphs is then derived.  Concluding remarks are presented in Section~\ref{sec: Conclusion}.

\section{Embedding trees in a grid} \label{sec:embedding trees}

Let $T$ be a tree such that $\Delta (T) \leq 4$. Consider the problem of embedding such a tree in a grid $\mathcal{G}$, such that the vertices must be placed at grid points and the edges drawn as non-intersecting paths of $\mathcal{G}$ with no bends, which we will call a \emph{straight model} of $T$, or simply an \emph{s-model} of $T$. Figure~\ref{fig: Models of T} depicts two possible s-models corresponding to a same tree. Given a path $Q = v_1, v_2,\ldots, v_k$ of $T$, and an s-model $\mathcal{M}$ of $T$, the number of bends of $Q$ is defined to be the number of bends of the grid path consisting of the concatenation of the (straight) paths of the grid corresponding to the edges $(v_1, v_2), (v_2, v_3), \ldots, (v_{k-1}, v_k)$ in $\mathcal{M}$. In Figure~\ref{fig: M_1}, the path $o, h, d, c, g, i, j, m$ has $5$ bends, whereas the same path in Figure~\ref{fig: M_2} has only $1$ bend.

Among all possible s-models, consider the problem of finding one in which the maximum number of bends of a path of $T$, over all of them, is minimum. Note that, since every path of a tree is contained in a leaf-to-leaf path, among the paths that bend the most in a given s-model is a leaf-to-leaf path, and therefore those are the only ones to be considered. More formally, let $\mathcal{M}(T)$ be the set of all possible s-models of a given tree $T$ and $u, v\in V(T)$ be leaves of $T$. The number of bends of the path connecting $u$ and $v$ in an s-model $\mathcal{M}\in\mathcal{M}(T)$ is denoted by $b_{\mathcal{M}}(u, v)$. Therefore, $b_{\mathcal{M}_1}(o,m) = 5$ and $b_{\mathcal{M}_2}(o,m) = 1$. Let
$$b(\mathcal{M})=\max\{b_{\mathcal{M}}(u, v)\mid\textrm{$u$ and $v$ are leaves in $T$}\}$$ denote the number of bends of the path that bends the most in $\mathcal{M}$, and
$$b(T)=\min\{b(\mathcal{M})\mid \mathcal{M}\in\mathcal{M}(T)\}$$
the minimum number of bends of an s-model, over all of them. Figure~\ref{fig: M_1} depicts an s-model~$\mathcal{M}_1$ of a tree $T$ such that $b(\mathcal{M}_1) = 5$, and therefore $b(T) \leq 5$. Figure~\ref{fig: M_2} shows another s-model $\mathcal{M}_2$ of $T$ for which $b(\mathcal{M}_2) = 3$ and, therefore, $b(T) \leq 3$. It is possible to show that no s-model $\mathcal{M}$ of $T$ has $b(\mathcal{M}) = 2$ and, thus, $b(T) = 3$.

We provide an algorithm that determines $b(T)$ and finds an s-model $\mathcal{M}$ for which $b(\mathcal{M}) = b(T)$. Before describing the algorithm, some more terminology will be required.

Given $\mathcal{M} \in \mathcal{M}(T)$, let $b^{\ell}_{\mathcal{M}}(p, v)$ denote the maximum number of bends found in a single path having as extreme vertices $p$ and a leaf of $T$, over all paths that contain $v \in V(T)$. That is,
$$b^{\ell}_{\mathcal{M}} (p, v)= \max\{b_{\mathcal{M}}(p, f) \mid \text{$f$ is a leaf of $T$ and the path connecting $p$ and $f$ contains $v$}\}\, .$$
Also, define
$$b^{\ell}_{T} (p, v)= \min\{b^{\ell}_{\mathcal{M}}(p, v) \mid \mathcal{M} \in \mathcal{M}(T)\}\, .$$
As examples, in Figure~\ref{fig: Models of T}, $b^{\ell}_{\mathcal{M}_2} (i, l)= 0 = b^{\ell}_{T} (i, l)$, $b^{\ell}_{\mathcal{M}_2} (i, j)=1 = b^{\ell}_{T} (i, j)$, $b^{\ell}_{\mathcal{M}_2} (i, g)~=~2~=~b^{\ell}_{T} (i, g)$. Note that $b^{\ell}_{\mathcal{M}_1} (b, c)=3$, whereas $b^{\ell}_{\mathcal{M}_2} (b, c)=2$. 

Let $\mathcal{M} \in \mathcal{M}(T)$ and $v \in V(T)$. Let $\{u^i_{\M}(v) \mid 1 \leq i \leq d(v) \}$ be $N(v)$ and $b^i_{\M}(v) = b^{\ell}_{\M} (v, u^i_{\M}(v))$. For $d(v) < i \leq 4$, define ``virtual'' neighbors $u^i_{\M}(v) = \emptyset$ for which $b^i_{\M}(v) = -1$. Assume that the neighbors (both real and virtual) are ordered so that $b^i_{\M}(v) \geq b^{i+1}_{\M}(v)$ for all $1 \leq i < 4$. As examples, $u^1_{\M_2}(i) = g$ (and $b^1_{\M_2}(i) = 2$), $u^2_{\M_2}(i) = j$ (and $b^2_{\M_2}(i) = 1$), $u^3_{\M_2}(i) = l$ (and $b^3_{\M_2}(i) = 0$), and $u^4_{\M_2}(i) = \emptyset$ (and $b^4_{\M_2}(i) = -1$). Let  $(p,v) \in E(T)$. Let $\{u^i_{\M}(p,v) \mid 1 \leq i < d(v) \}$ be $N(v) \setminus \{p\}$ and $b^i_{\M}(p,v) = b^{\ell}_{\M}(v, u^i_{\M}(p,v))$. For $d(v) \leq i < 4$, also define $u^i_{\M}(p,v) = \emptyset$ for which $b^i_{\M}(p,v) = -1$. Again, assume that the neighbors $u^i_{\M}(p,v)$ are ordered according to their respective values of $b^i_{\M}(p,v)$. As examples,  $u^1_{\M_2}(g,i) = j$ (and $b^{1}_{\M_2}(g,i) = 1$), $u^2_{\M_2}(g,i) = l$ (and $b^{2}_{\M_2}(g,i) = 0$), and $u^3_{\M_2}(g,i) = \emptyset$ (and $b^3_{\M_2}(g,i) = -1$). Besides, $u^1_{\M_2}(i,g) = c$ (and $b^1_{\M_2}(i,g) = 2$), and $u^2_{\M_2}(i,g) = u^3_{\M_2}(i,g) = \emptyset$ (and $b^2_{\M_2}(i,g) = b^3_{\M_2}(i,g) = -1$). The s-model may be omitted from these notations when it is unambiguous.


A tree $T$ can be built from a single vertex $v_0$ by a sequence $v_1,v_2,\ldots,v_{n-1}$ of vertex additions, each new vertex $v_i$ adjacent to exactly one vertex $p_i$ of $T$ $[\{v_0,v_1,\ldots,v_{i-1}\}]$ for all $1~\leq~i~<~n$. We will call that $T$ \emph{is incrementally built by}  $(v_0, \emptyset), (v_1, p_1), \ldots, (v_{n-1},p_{n-1})$. For instance, the tree $T$ of Figure~\ref{fig: Models of T} is incrementally built by $(g, \emptyset), (i, g), (l, i), (c, g), (j, i),$ $(k, j), (m, j), (d, c), (b, c), (f, b), (a, b), (e, d), (h, d), (n, h), (o, h)$. 

Let $v \in V(T)$ and $\M \in \M(T)$. We say that $v$ is \emph{balanced} if $u^1(v)$ and $u^2(v)$ are mutually in the same horizontal or vertical grid line in $\M$ (and, therefore, so are $u^3(v)$ and $u^4(v)$).

We are ready to present  Algorithm~\ref{alg}, which consists  of adding iteratively vertices to $T$ and, for each new vertex $v$, traversing $T$ in post-order taking $v$ as the root. The operation to be carried out in each visited vertex is to balance $v$ if it is not balanced. To illustrate its execution,  Figure~\ref{fig:alg_example} presents the output of the algorithm having the tree $T$ of Figure~\ref{fig: Models of T} as input. Each s-model corresponds to a partial s-model of the tree, as it is at the end of each iteration. Regarding the time complexity of the algorithm, it is possible to keep the values of $u^i_{\M}(v)$ stored for each $v \in V(T)$ and $1 \leq i \leq 4$, and update them in constant time right after the balance step, based on which subtrees had their positions exchanged and on the respective values of $u^i_{\M}(w)$ from the neighbors $w \in N(v)$. Since the algorithm performs $n-1$ post-order traversals in $T$, the algorithm runs in $O(n^2)$ time. The remaining of the section is devoted to the correctness of the algorithm, that is, to prove that if $\M$ is the s-model produced by Algorithm~\ref{alg} on input $T$, then $b(\M) = b(T)$. We also provide some numerical results concerning the number of bends of trees.
\begin{algorithm}

\begin{algorithmic}[0]
\Require a tree $T$ such that $\Delta(T) \leq 4$.
\Ensure an s-model $\mathcal{M}$ of $T$ such that $b(\mathcal{M}) = b(T)$.
\State Let $S = (v_0, \emptyset), (v_1, p_1), \ldots, (v_{n-1},p_{n-1})$ be such that $T$ is incrementally built by $S$.
\State Let $\mathcal{M}$ be an s-model having a single grid point representing $v_0$.
 \For {$i$ $\leftarrow$ $1$ \textbf{to} $n-1$}
    \State Add to $\mathcal{M}$ the vertex $v_i$ attached to the grid point of $p_i$, in any free horizontal or vertical grid line of $p_i$
    \State\textsc{Balance}($\mathcal{M}$, $v_i$, $p_i$) 
 \EndFor\\
    
\Procedure{Balance}{$\mathcal{M}$, $p$, $v$}
    \For {$u \in N(v) \setminus \{p\}$}
        \State\textsc{Balance}($\mathcal{M}$, $v$, $u$) 
    \EndFor
    \State{\parbox{\textwidth}{If $v$ is not balanced, then make it balanced by rearranging in $\mathcal{M}$ the drawing of the four subtrees of $v$\\ rooted at $u^i(v)$ (for $1 \leq i \leq 4$), potentially rotating and rescaling them to fit [\textbf{\textit{balance step}}]}}
\EndProcedure
\end{algorithmic}
\caption{Determining $b(T)$}\label{alg}
\end{algorithm}
\begin{figure}[htb]
    \centering
    \subfigure[][]{\includegraphics[scale=0.6]{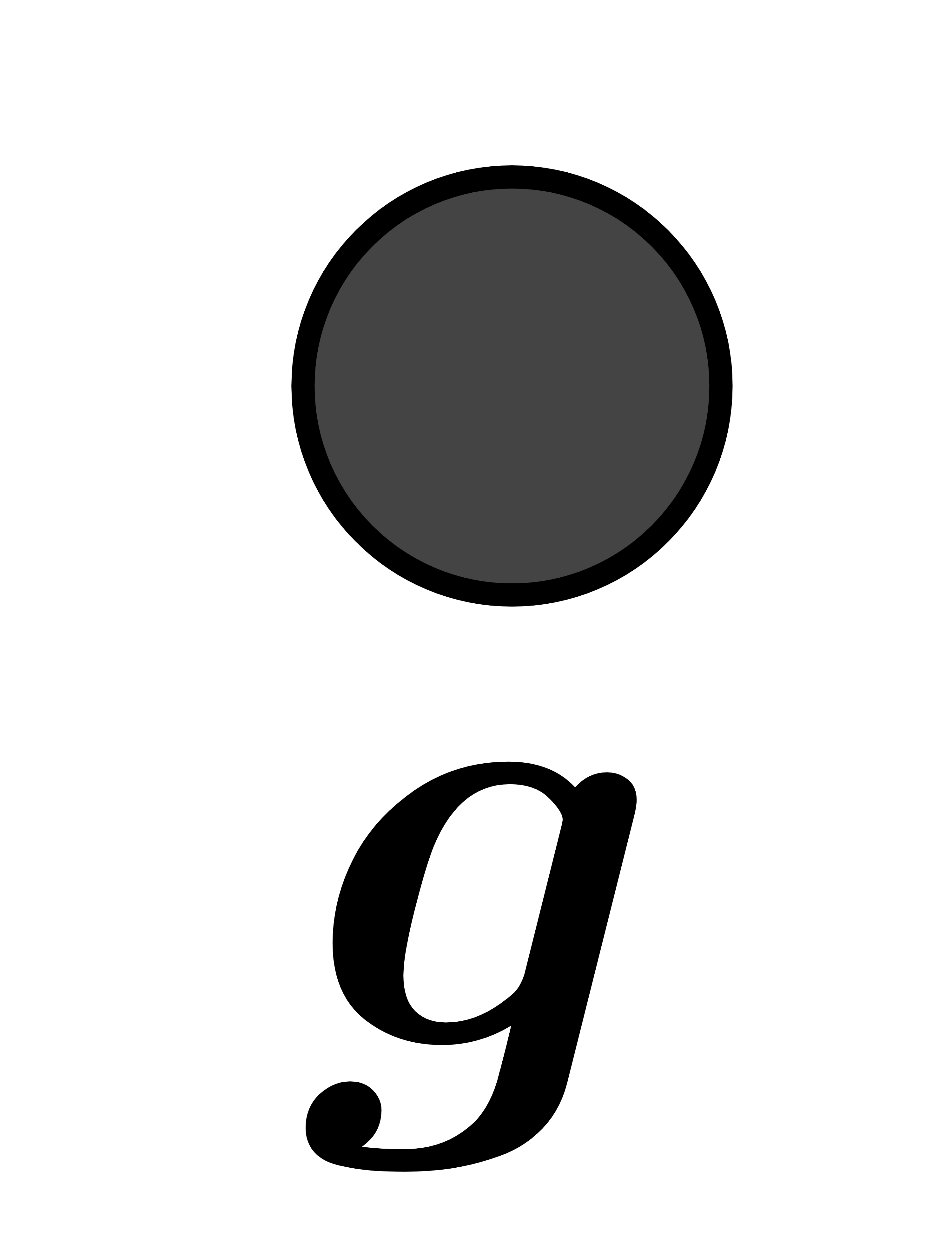}}
    \quad
    $\xrightarrow{i: 1-5}$
    \quad
    \subfigure[][]{\includegraphics[scale=0.6]{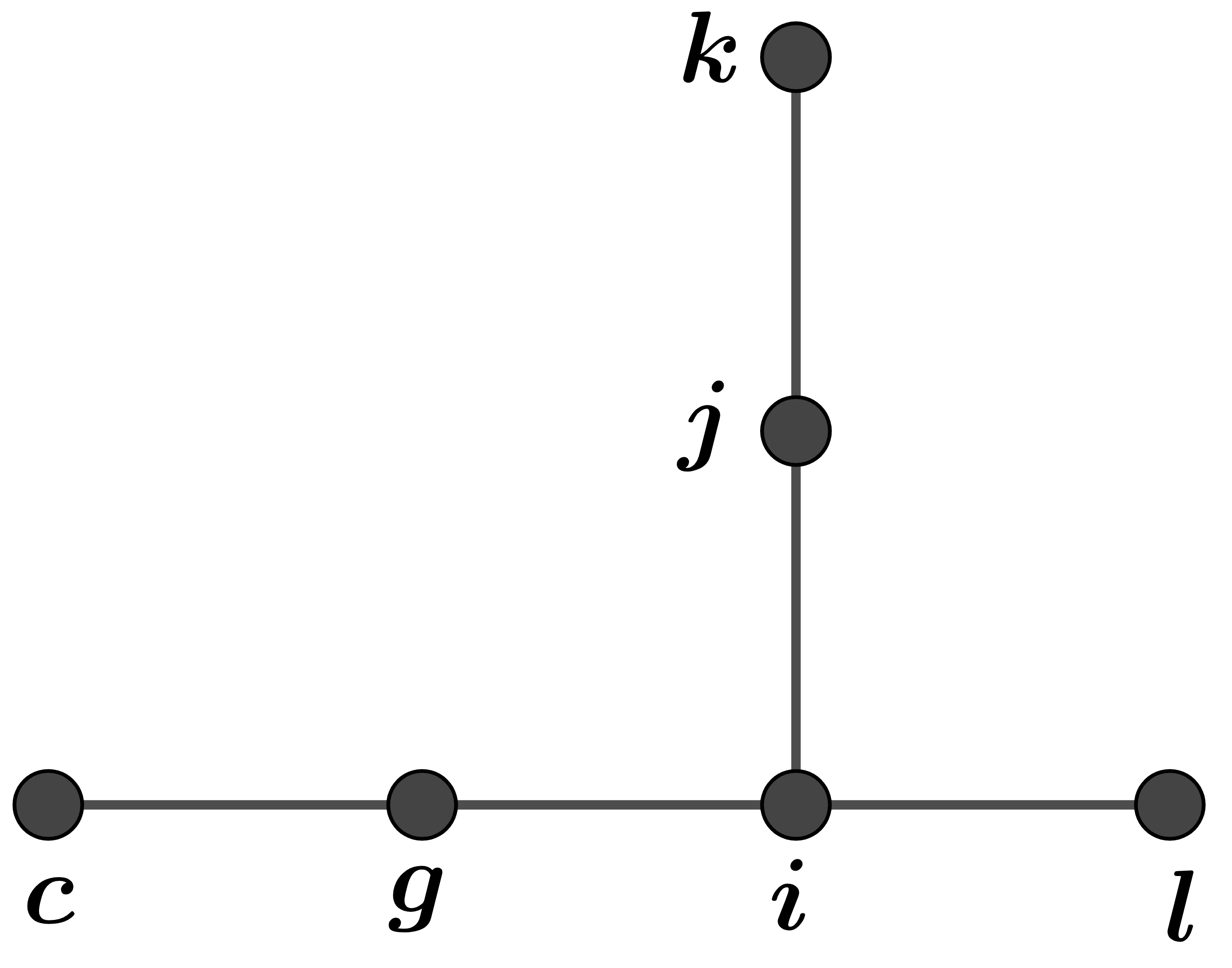}}
    \quad
    $\xrightarrow{}$
    \quad
    \subfigure[][]{\includegraphics[scale=0.6]{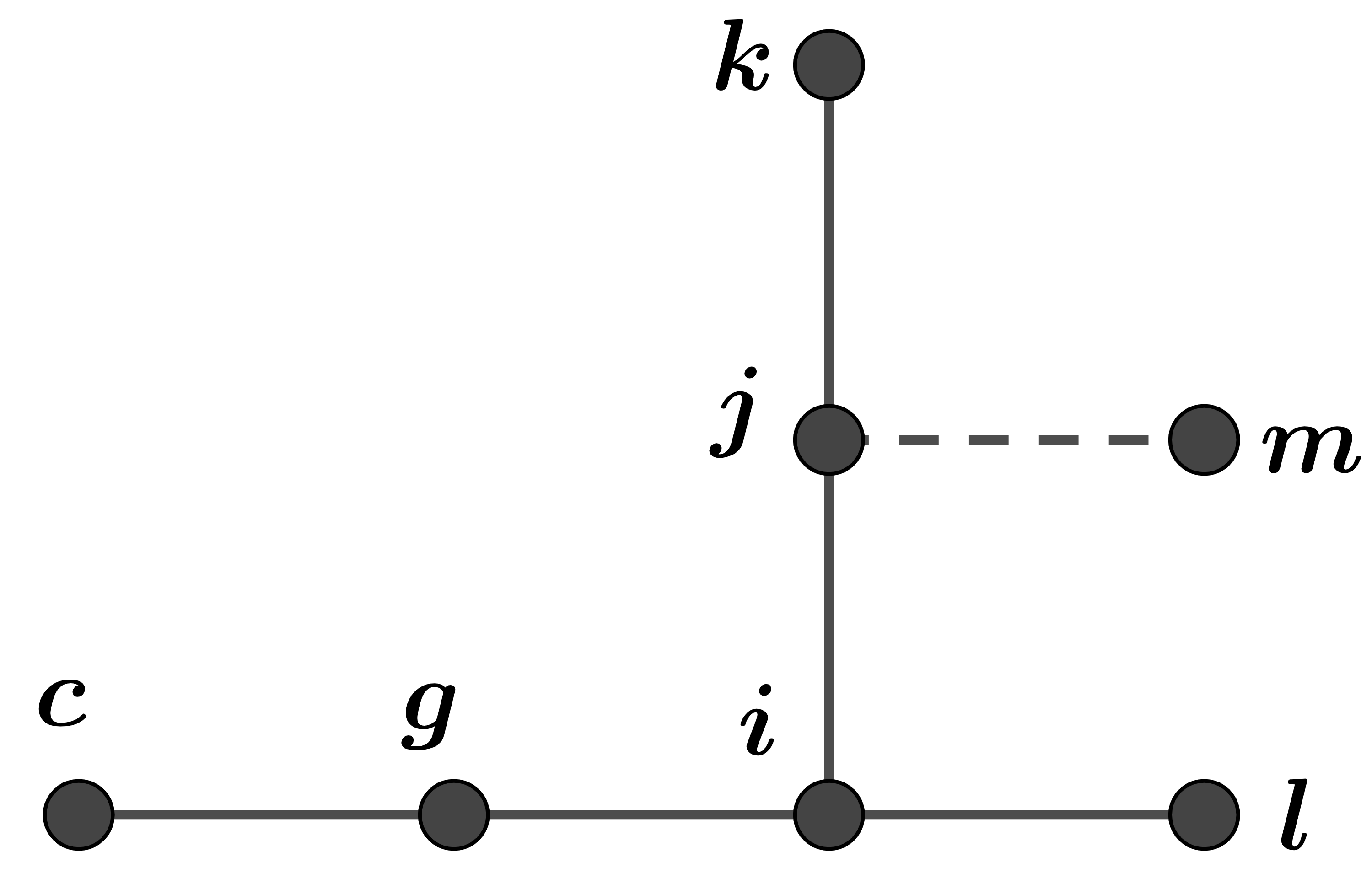}}
    \quad
    $\xrightarrow{\textsc{Balance}(\M, j, i)}$
    \quad
    \subfigure[][]{\includegraphics[scale=0.6]{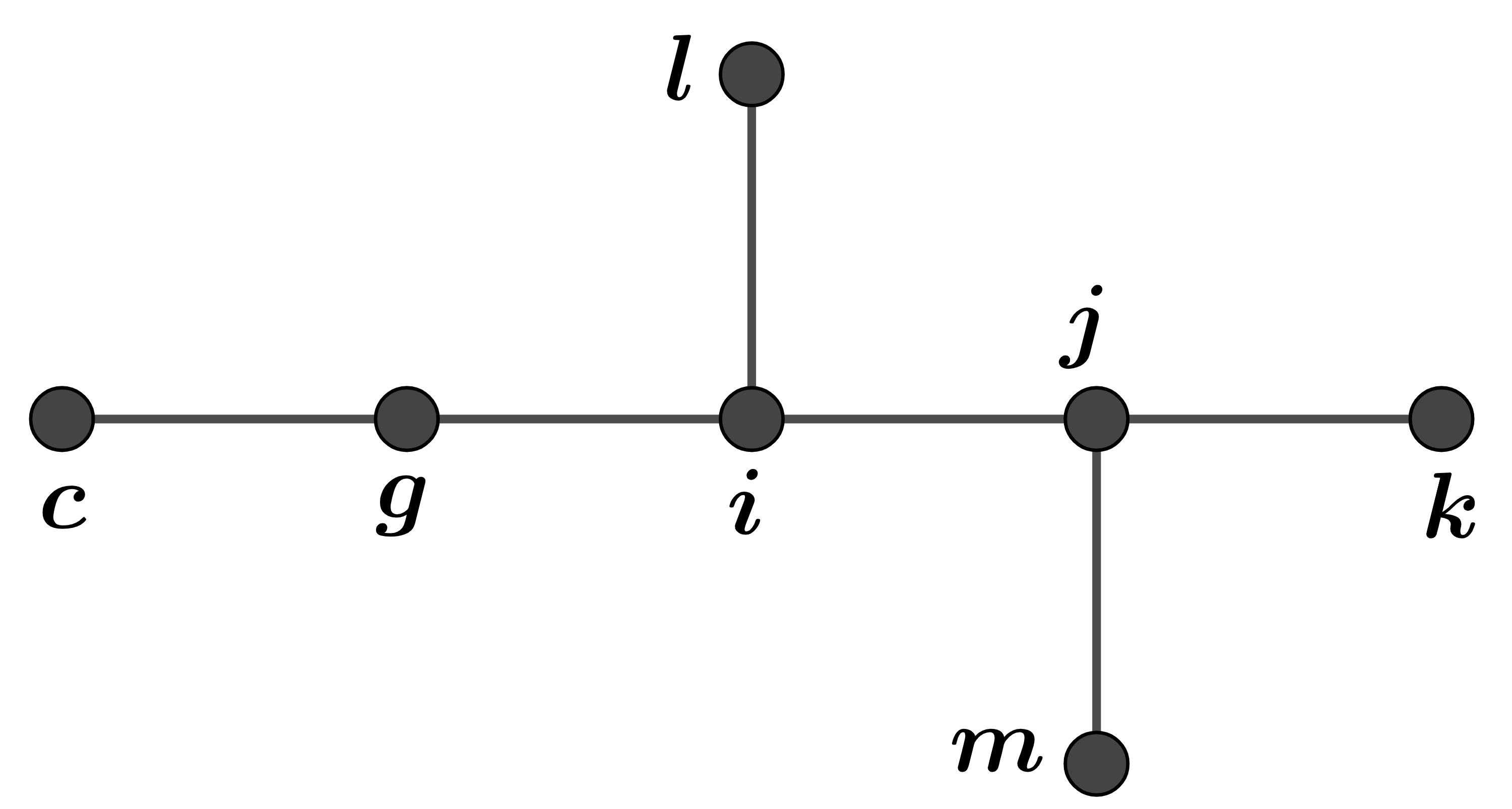}}
    \quad
    $\xrightarrow{i: 7-9}$
    \quad
    \subfigure[][]{\includegraphics[scale=0.6]{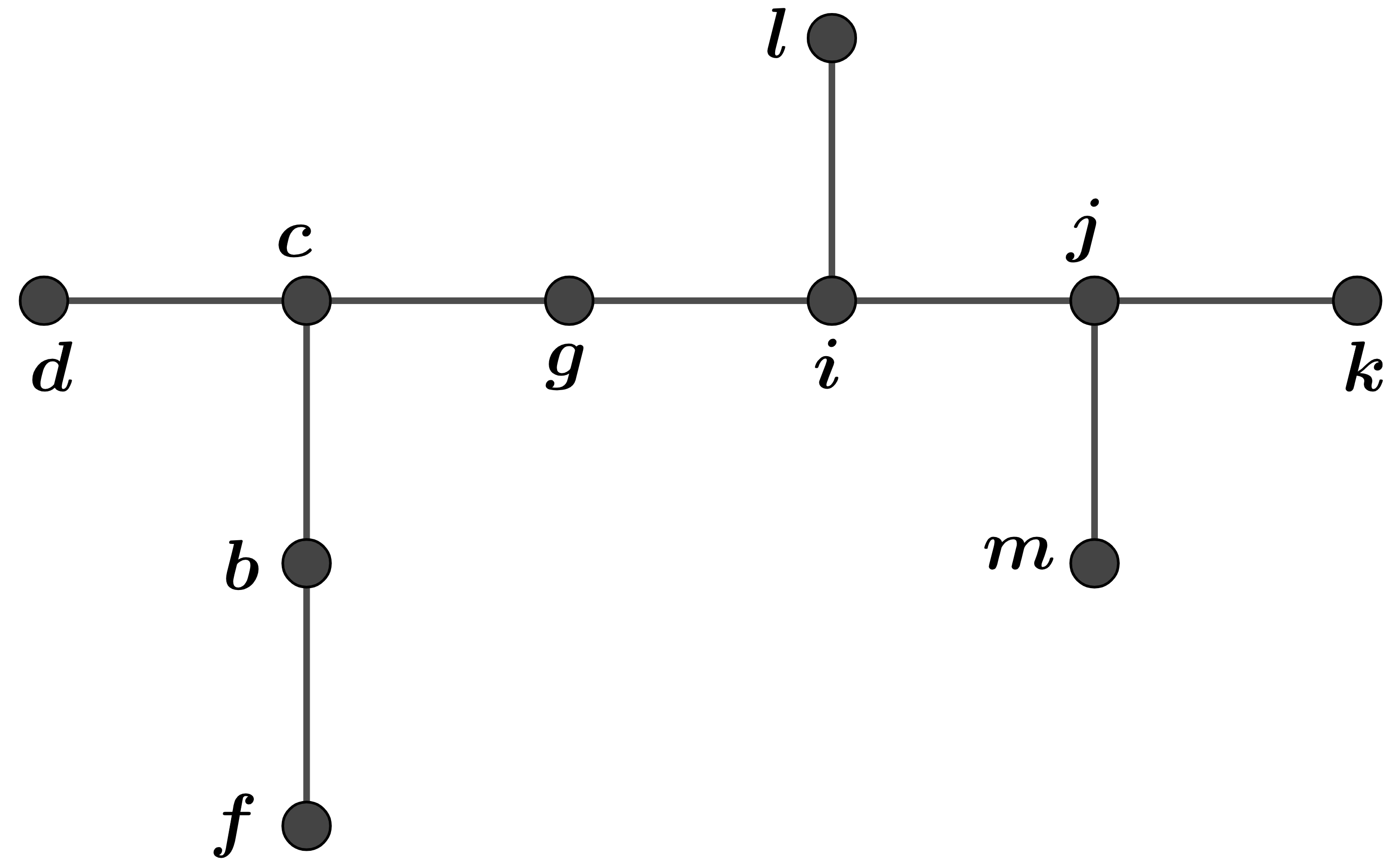}}
    \quad
    $\xrightarrow{}$
    \quad
    \subfigure[][]{\includegraphics[scale=0.6]{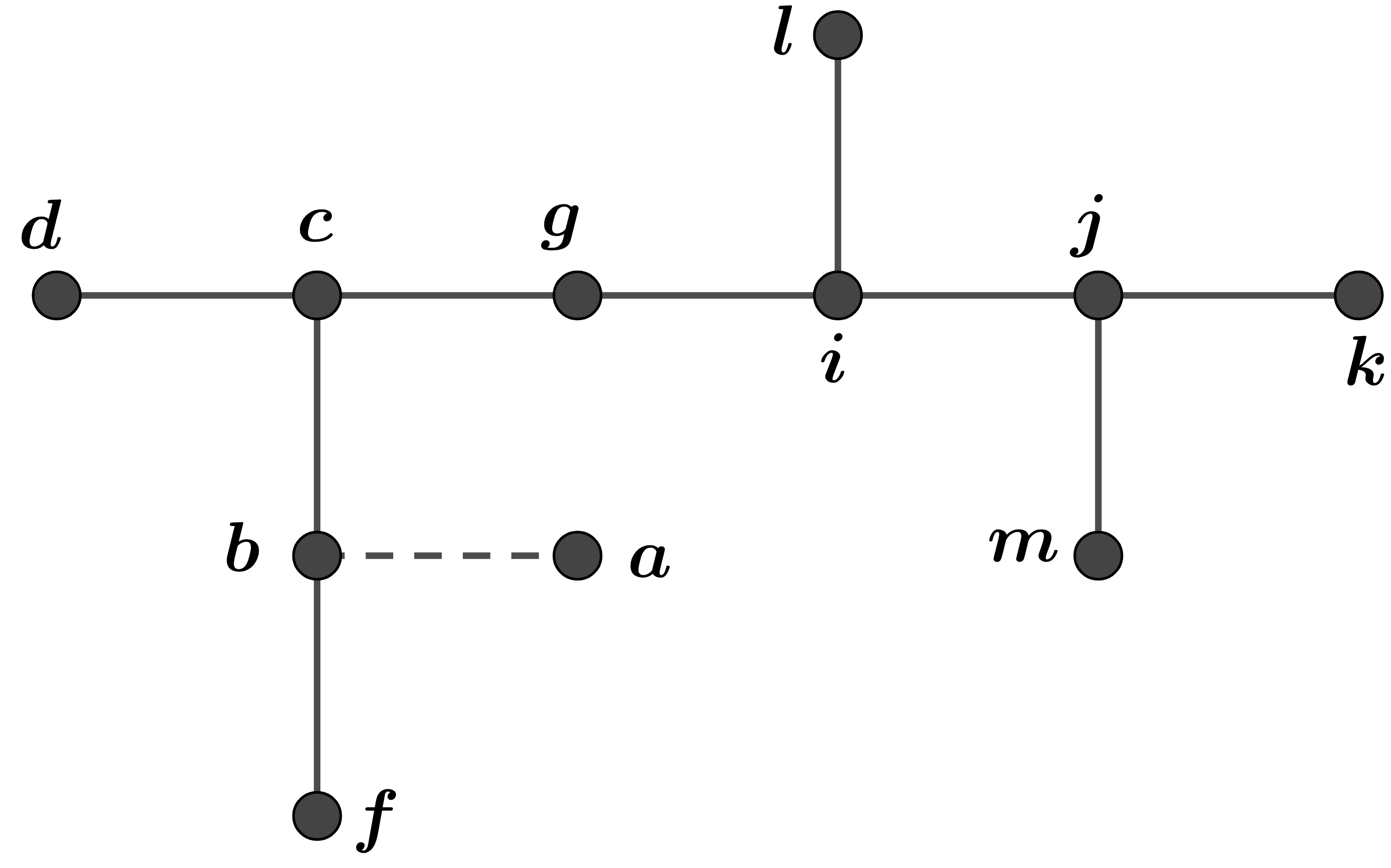}}
    \quad
    $\xrightarrow{\textsc{Balance}(\M, b, c)}$
    \quad
    \subfigure[][]{\includegraphics[scale=0.6]{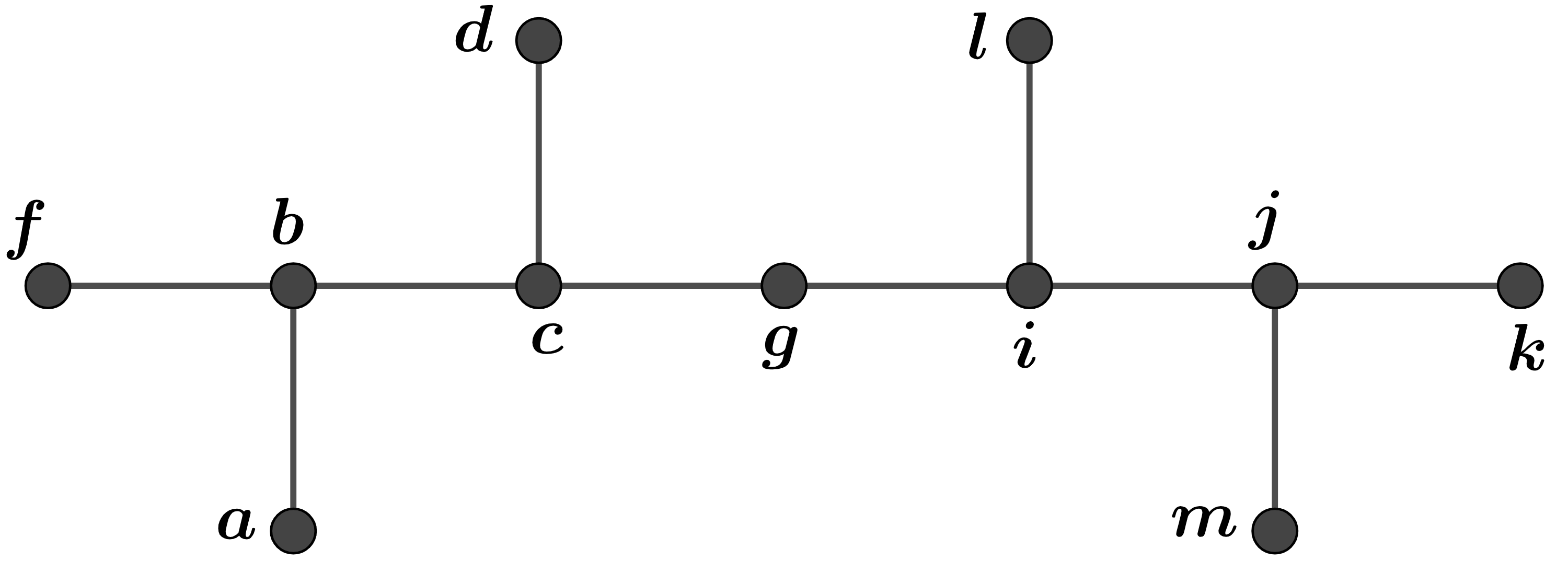}}
    \quad
    $\xrightarrow{i: 11-13}$
    \quad
    \subfigure[][]{\includegraphics[scale=0.6]{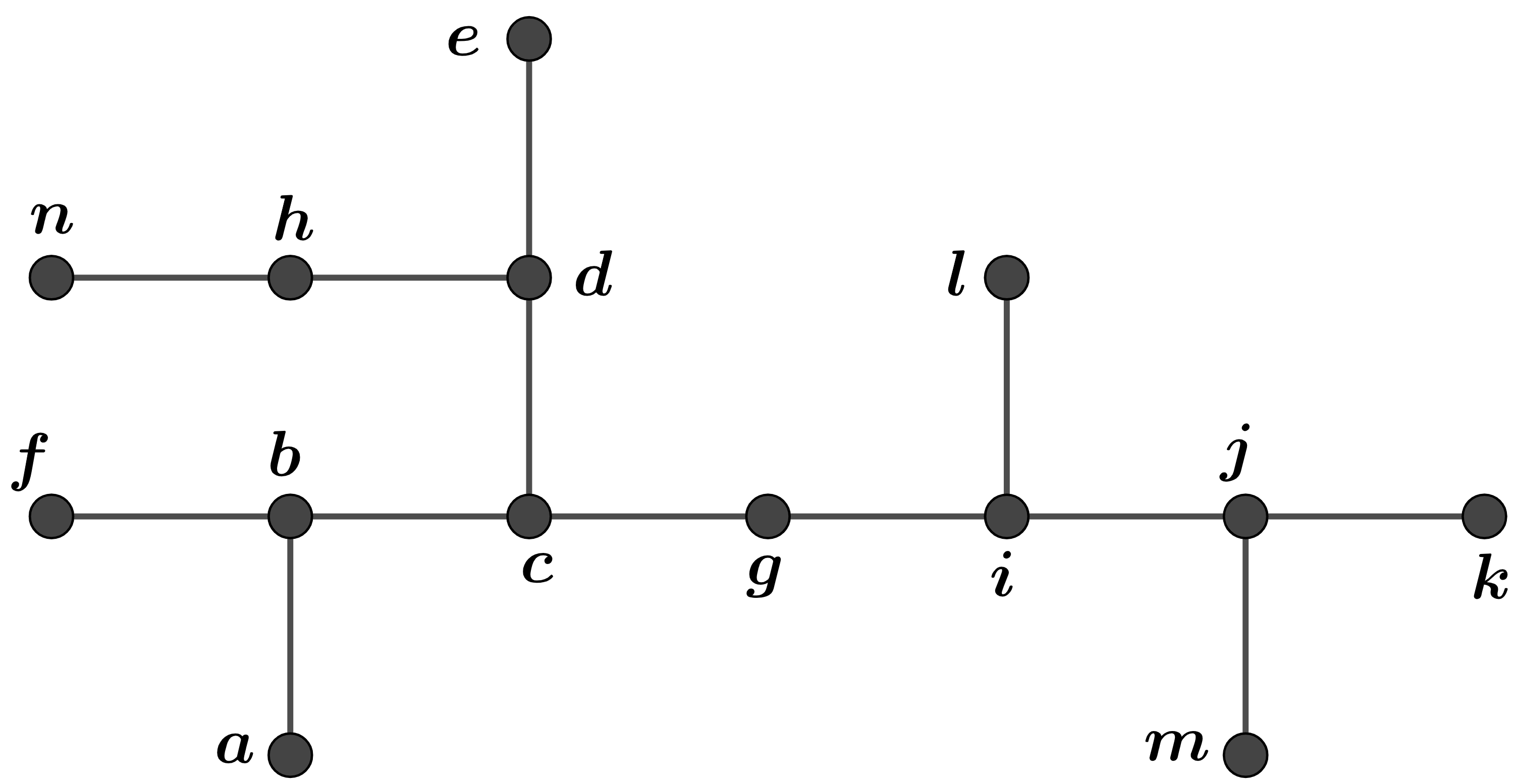}}
    \quad
    $\xrightarrow{}$
    \quad
    \subfigure[][]{\includegraphics[scale=0.6]{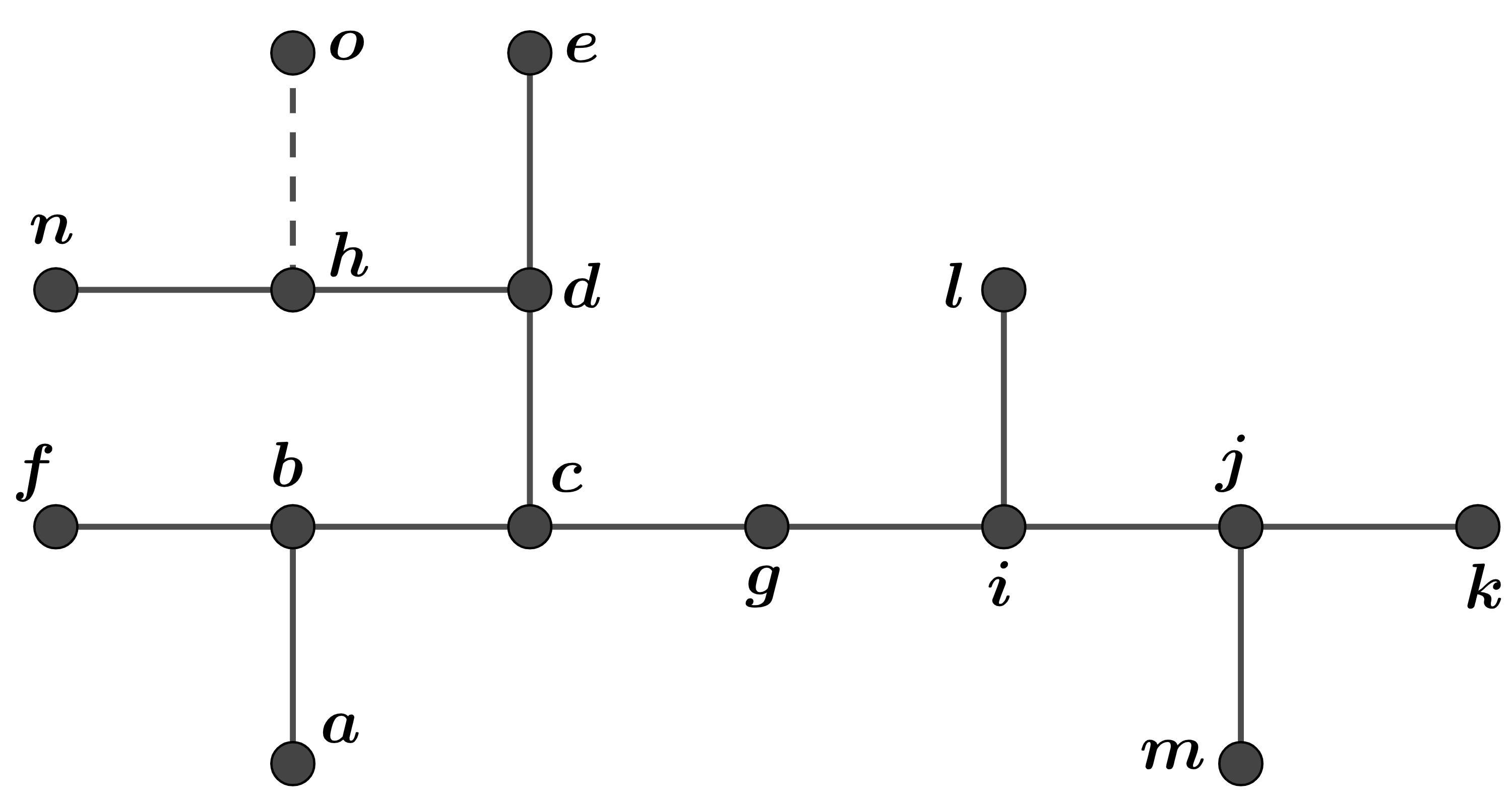}}
    \quad
    $\xrightarrow{\textsc{Balance}(\M, d, c)}$
    \quad
    \subfigure[][]{\includegraphics[scale=0.6]{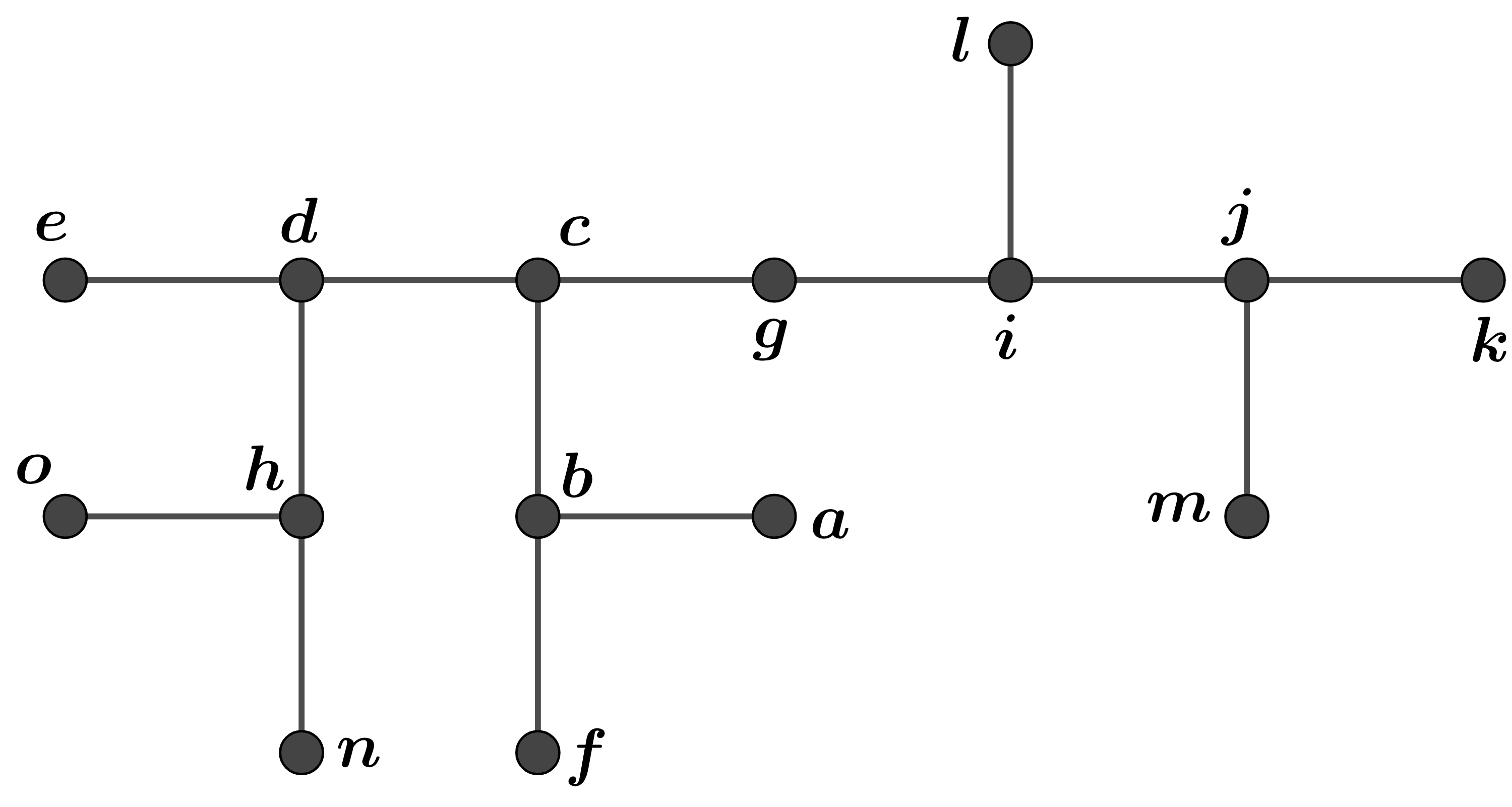}}
    \quad
    $\xrightarrow{\textsc{Balance}(\M, h, d)}$
    \quad
    \subfigure[][]{\includegraphics[scale=0.6]{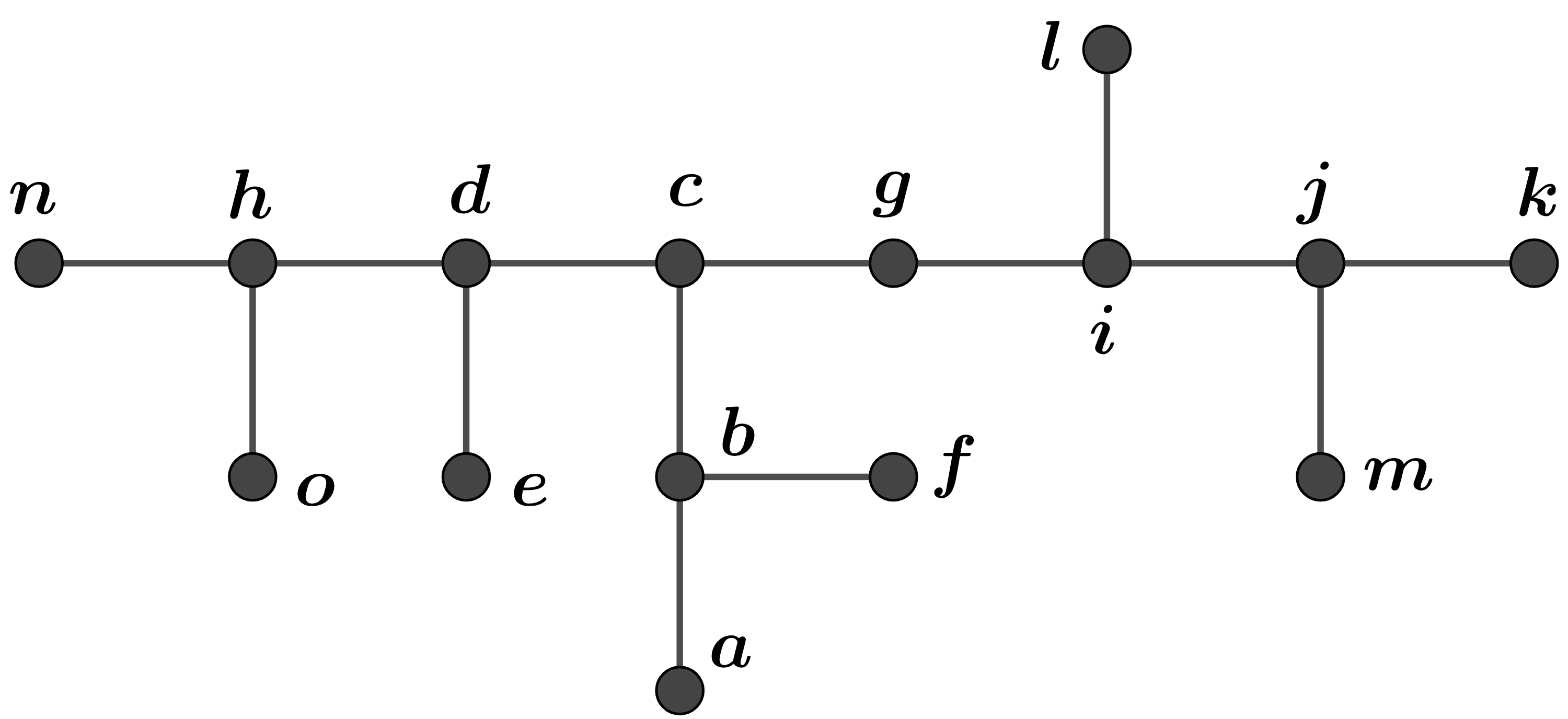}}
    \caption{Execution of the Algorithm~\ref{alg} with the tree $T$ given as input and the sequence of vertex additions $S=(g, \emptyset), (i, g), (l, i), (c, g), (j, i), (k, j), (m, j),$ $(d, c), (b, c), (f, b), (a, b), (e, d), (h, d), (n, h), (o, h)$.}
    \label{fig:alg_example}
\end{figure}

An ordered pair $(p,v)$ is called \emph{critical in an s-model} $\M$ if  

\begin{itemize}
    \item $v = \emptyset$; or
    \item $(p,v) \in E(T)$  and
    \begin{itemize}
        \item $b^{\ell}_{\M}(p,v) = b^{1}_{\M}(p,v)$ and $(v, u^{1}_{\M}(p,v))$ is critical; or
        \item $b^{\ell}_{\M}(p,v) = b^{2}_{\M}(p,v) + 1$ and $(v, u^1_{\M}(p,v)), (v, u^2_{\M}(p,v))$ are critical. 
    \end{itemize}
\end{itemize}
In Figure~\ref{fig: M_1}, $(a, \emptyset)$, $(b,a)$, $(c,b)$, and $(i,j)$ are examples of critical pairs, while $(f,b)$, $(g,i)$, and  $(c,g)$ are not.

Let $(p,v) \in E(T)$ and $T(p,v)$ be the connected component of $T - (N(p) \setminus \{v\})$ that contains $p$. Intuitively, $T(p,v)$ is what remains of $T$ when all neighbors of $p$, except~$v$, are removed along with the vertices that become disconnected from $p$.
\vspace{2mm}
\begin{lem} \label{lem:vcritical}
    Let $T$ be a tree and $\M \in \M(T)$. If $(p,v) \in E(T)$ is a critical pair in $\M$, then $b^{\ell}_T(p,v) = b^{\ell}_{\M}(p,v)$.
\end{lem}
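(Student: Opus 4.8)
The plan is to prove this by induction on $|V(T(p,v))|$, the number of vertices in the component of $p$ after removing $N(p)\setminus\{v\}$. Since $b^\ell_T(p,v) = \min\{b^\ell_{\M'}(p,v) \mid \M' \in \M(T)\}$ by definition, we always have $b^\ell_T(p,v) \leq b^\ell_\M(p,v)$, so the content of the lemma is the reverse inequality: no s-model can do better than $\M$ on the pair $(p,v)$ when $(p,v)$ is critical in $\M$. Fix an arbitrary $\M' \in \M(T)$; I want to show $b^\ell_{\M'}(p,v) \geq b^\ell_\M(p,v)$.

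First I would unwind the definition of critical. If $(p,v)$ is critical in $\M$, there are two cases. In the first, $b^\ell_\M(p,v) = b^1_\M(p,v) = b^\ell_\M(v, u^1_\M(p,v))$ and $(v, u^1_\M(p,v))$ is critical in $\M$; here a witnessing leaf-path for $b^\ell_\M(p,v)$ goes from $p$ through $v$ into the subtree at $u^1_\M(p,v)$ without bending at $v$ (the edge $(p,v)$ can be drawn collinear with $(v, u^1_\M(p,v))$, contributing no bend at $v$), so passing through $v$ costs nothing. In the second, $b^\ell_\M(p,v) = b^2_\M(p,v) + 1$ and both $(v, u^1_\M(p,v))$ and $(v, u^2_\M(p,v))$ are critical in $\M$; here every leaf-path from $p$ through $v$ that continues into the ``best'' two subtrees must bend at $v$, because the two edges incident with $v$ that are collinear with $(p,v)$'s line can carry at most one of those subtrees each in a non-bending way — more carefully, among the two grid lines through $v$, the edge $(v,p)$ occupies one direction, and only the opposite direction on that same line avoids a bend, so at most one of $u^1, u^2$ avoids a bend, forcing a bend into the other. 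In either case, the key recursive fact is that by the induction hypothesis applied to the smaller components $T(v, u^i_\M(p,v))$ (which are strictly smaller than $T(p,v)$ since they exclude $p$), we get $b^\ell_T(v, u^i_\M(p,v)) = b^\ell_\M(v, u^i_\M(p,v)) = b^i_\M(p,v)$ for the relevant $i$.

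Now, in $\M'$: consider how the subtrees hanging off $v$ (other than the one containing $p$) are arranged around the grid point of $v$. At most two of them can be collinear with the edge $(v,p)$ in $\M'$ (one on each side), and of those, entering from $p$ along the edge $(p,v)$ and continuing straight costs no bend, but any other of the $\leq 2$ remaining directions at $v$ forces a bend on a path coming in along $(p,v)$. So in $\M'$, at most one neighbor $w \in N(v)\setminus\{p\}$ is such that a $p$-to-leaf path through $v$ and $w$ need not bend at $v$; for every other neighbor $w'$, such a path bends at $v$. Therefore $b^\ell_{\M'}(p,v) \geq \min\bigl(\text{best single subtree with no bend at }v,\ 1 + \text{second-best subtree}\bigr)$, and in each case the subtree values $b^\ell_{\M'}(v, w)$ are bounded below by $b^\ell_T(v,w)$, which for the critical neighbors equals $b^i_\M(p,v)$ by induction. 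Matching this against the two definitional cases of criticality yields $b^\ell_{\M'}(p,v) \geq b^\ell_\M(p,v)$, completing the induction. The base case is $T(p,v)$ consisting of the single edge $(p,v)$ with $p$ a leaf of its component, i.e.\ $v$'s other neighbors are all $\emptyset$; then criticality forces $b^\ell_\M(p,v) = b^1_\M(p,v) = -1$ or the recursion bottoms out at $v=\emptyset$ with value $-1$, and $b^\ell_{\M'}(p,v) = -1$ trivially.

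The main obstacle I anticipate is the geometric bookkeeping in the second case: rigorously arguing that two critical subtrees cannot both be placed ``straight through'' $v$ relative to $p$, hence that a bend is genuinely unavoidable on one of them in \emph{every} model, and simultaneously checking that the chosen leaf in that subtree still realizes $b^\ell$ after the $+1$. This requires carefully relating $b^\ell_{\M'}(v,w)$ — defined over paths containing $v$ — to the intrinsic quantity $b^\ell_T(v,w)$, making sure the inductive hypothesis is applied to the correct directed pair and the correct (strictly smaller) component, and that the freedom to reroute within $\M'$'s subtree at $w$ does not let one shave off the forced bend at $v$ itself. Handling the tie-breaking in the ordering $b^1 \geq b^2 \geq \cdots$ (when several neighbors share the max) needs a little care but is not fundamentally hard.
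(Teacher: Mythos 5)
Your proposal is correct and follows essentially the same route as the paper: induction on $|V(T(p,v))|$, the same two-case unwinding of criticality with the induction hypothesis applied to the pairs $(v,u^i)$, and the same key observation that in any s-model at most one subtree at $v$ can continue the edge $(p,v)$ without a bend, which forces $b^{\ell}_{\M'}(p,v)\ge b^2+1$ in the second case. The only slip is your base case: when $T(p,v)$ is a single edge, $b^{\ell}_{\M}(p,v)=0$ (the path from $p$ to the leaf $v$ has no bends), not $-1$ --- the value $-1$ is reserved for the virtual neighbors $u^i=\emptyset$ --- but the claimed equality $b^{\ell}_T(p,v)=b^{\ell}_{\M}(p,v)$ still holds there trivially.
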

\begin{proof}
 Let $u^i = u^i(p,v)$ and $b^i = b^i(p,v)$, for all $1 \leq i \leq 3$. The proof is by induction on $n = |V(T(p,v))|$. If $n = 2$, then $b^{\ell}_T(p,v) = b^{\ell}_{\M}(p, v) = 0$. Assume $n > 2$ and that the result holds for any such a tree $T(p,v)$ with less than $n$ vertices. As $(p,v) \in E(T)$ is critical, then
\begin{itemize}
    \item if $b^{\ell}_{\M}(p,v) = b^1$ and $(v, u^1)$ is critical, then by induction hypothesis we have that $b^{\ell}_T(p,v) \geq b^{\ell}_T(v,u_1) =  b^{\ell}_{\M}(v, u_1) = b^1$. On the other hand, $b^{\ell}_T(p,v) \leq b^{\ell}_{\M}(p,v) = b^1$. Thus, $b^{\ell}_T(p,v) = b^1 = b^{\ell}_{\M}(p,v)$.
    \item if $b^{\ell}_{\M}(p,v) = b^2 + 1$ and $(v, u^1), (v, u^2)$ are critical, then by induction hypothesis, $b^{\ell}_T(v,u^1) = b^{\ell}_{\M}(v,u^1) = b^1$ and $b^{\ell}_T(v,u^2) = b^{\ell}_{\M}(v,u^2) = b^2$. Since $b^1 \geq b^2$ and $u^1$ or $u^2$ will bend with respect to the edge $(p,v)$, thus $b^{\ell}_T(p,v) \geq b^2+1$. On the other hand, $b^{\ell}_T(p,v) \leq b^{\ell}_{\M}(p,v) = b^2+1$. Thus, $b^{\ell}_T(p,v) = b^2+1 = b^{\ell}_{\M}(p,v)$. 
\end{itemize}
\end{proof}
\vspace{2mm}
\begin{cor} \label{cor:lowerbound}
Let $T$ be a tree and $\M \in \M(T)$. If $(p,v)$ is a critical pair in $\M$, then $b(T) \geq b^{\ell}_{\M}(p,v)$.
\end{cor}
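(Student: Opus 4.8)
The plan is to deduce Corollary~\ref{cor:lowerbound} almost immediately from Lemma~\ref{lem:vcritical}, together with the definition of a critical pair, by exhibiting a leaf-to-leaf path whose number of bends in \emph{every} s-model is at least $b^{\ell}_{\M}(p,v)$.

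First I would dispose of the boundary case: if $v = \emptyset$, then $(p,\emptyset)$ being critical means $p$ is (a choice of) a degree-$\leq 3$ vertex and the bound $b(T) \geq -1$ or $b(T) \geq 0$ is trivial; more usefully, in the intended reading $(p,v)$ with $v=\emptyset$ plays the role of a virtual edge, and $b^{\ell}_{\M}(p,v)$ is then $-1$ or defined via the branch below, so the inequality holds vacuously. The substantive case is $(p,v) \in E(T)$. Here the key observation is that criticality is a \emph{recursive} property: unwinding the definition of ``$(p,v)$ is critical'' produces a chain $(p,v) = (p_0,p_1), (p_1,p_2), \ldots$ that, at each step, either follows the single child $u^1$ realizing $b^{\ell}$, or branches along $u^1$ and $u^2$ — and because each such branch is itself required to be critical, the recursion bottoms out only at virtual neighbors, i.e.\ at leaves of $T$. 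Following the recursion down $u^1$ from $v$, and also down $u^1$ from $p$ in the reverse direction (using that $(p,v)$ critical forces the pair pointing ``back'' toward $p$ to be critical too, or by symmetry picking any leaf $f_p$ reachable through $p$), I obtain two leaves $f_1, f_2$ such that the path $P$ joining them passes through the edge $(p,v)$.

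The heart of the argument is then: for \emph{any} $\M' \in \M(T)$, the number of bends of $P$ in $\M'$ is at least $b^{\ell}_{\M}(p,v)$. This is where Lemma~\ref{lem:vcritical} does the work. By the lemma, $b^{\ell}_T(p,v) = b^{\ell}_{\M}(p,v)$, i.e.\ the quantity $b^{\ell}_{\M}(p,v)$ is in fact a \emph{minimum over all s-models}; so for every $\M'$ we have $b^{\ell}_{\M'}(p,v) \geq b^{\ell}_T(p,v) = b^{\ell}_{\M}(p,v)$. But $b^{\ell}_{\M'}(p,v)$ is by definition the max, over leaves $f$ reachable from $v$ avoiding $p$, of $b_{\M'}(p,f)$; choosing the leaf $f_1$ above, the path from $p$ to $f_1$ has at least $b^{\ell}_{\M}(p,v)$ bends in $\M'$, and extending it to the full leaf-to-leaf path $P$ only adds bends. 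Hence $b_{\M'}(f_1,f_2) \geq b^{\ell}_{\M}(p,v)$, so $b(\M') \geq b^{\ell}_{\M}(p,v)$ for every $\M'$, and taking the minimum over $\M'$ gives $b(T) \geq b^{\ell}_{\M}(p,v)$.

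I expect the only real subtlety — and the step to write carefully — to be the claim that a critical pair $(p,v)$ always ``completes'' to a genuine leaf-to-leaf path, equivalently that one can pick the leaf $f_1$ inside $T(v,p)$ and a leaf $f_2$ on the $p$-side so that $P$ realizes the relevant bend count; this is really just bookkeeping with the recursive definition, plus the remark (already used inside the proof of Lemma~\ref{lem:vcritical}) that ``$u^1$ or $u^2$ will bend with respect to the edge $(p,v)$.'' An even shorter route, which I would present if space is tight: Lemma~\ref{lem:vcritical} says $b^{\ell}_{\M}(p,v) = b^{\ell}_T(p,v) = \min_{\M'} b^{\ell}_{\M'}(p,v) = \min_{\M'} \max_f b_{\M'}(p,f)$, and since each $b_{\M'}(p,f) \leq b_{\M'}(f',f) \leq b(\M')$ for a suitable leaf $f'$ on the other side of $p$, we get $b^{\ell}_{\M}(p,v) \leq \min_{\M'} b(\M') = b(T)$, which is exactly the claim.
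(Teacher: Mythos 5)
Your proposal is correct, and your ``shorter route'' at the end is essentially the paper's entire proof: by Lemma~\ref{lem:vcritical}, $b^{\ell}_{\M}(p,v)=b^{\ell}_{T}(p,v)$, and $b(T)\geq b^{\ell}_{T}(p,v)$ because in any s-model a $p$-to-leaf path through $v$ extends to a leaf-to-leaf path with at least as many bends. The longer construction you sketch first is unnecessary (and slightly overreaches in claiming a single fixed leaf-to-leaf path witnesses the bound in \emph{every} model, when the witness may vary with the model), but the final argument is exactly the intended one.
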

\begin{proof}
By Lemma~\ref{lem:vcritical}, we have that $b(T) \geq b^{\ell}_{T}(p,v) = b^{\ell}_{\M}(p,v)$.
\end{proof}
\vspace{2mm}
\begin{lem} \label{lem:naoaumenta}
    Let $\M'$ be the s-model after the application of the balance step of Algorithm~\ref{alg} on s-model $\M$ in a vertex $v$ which is not balanced.  Then, $b(\M') \leq b(\M)$.
\end{lem}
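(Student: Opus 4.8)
The plan is to partition the leaf-to-leaf paths of the underlying tree $T$ into those that avoid $v$ and those that pass through $v$, and to verify that neither kind gains bends when the balance step is applied at $v$. I would set $u^i = u^i_\M(v)$ and $b^i = b^i_\M(v)$ for $1 \le i \le 4$ (so $b^1 \ge b^2 \ge b^3 \ge b^4$), and let $C_i$ denote the connected component of $T - v$ containing $u^i$ (with $C_i = \emptyset$ when $u^i = \emptyset$); the vertex sets of $\{v\}, C_1, C_2, C_3, C_4$ then partition $V(T)$, and these $C_i$ are exactly the ``four subtrees of $v$'' that the balance step rearranges. Since $v$ is not balanced we have $d(v) \ge 2$, hence $b^1 \ge b^2 \ge 0$, and $u^1, u^2$ do not lie on a common grid line through $v$ in $\M$; that is, the edges $vu^1$ and $vu^2$ are perpendicular there. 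The step keeps the grid point of $v$ fixed and moves each $C_i$ (together with the edge $vu^i$) rigidly — by a rotation through a multiple of $90^\circ$, possibly a reflection, and a uniform rescaling — so that afterwards $u^1, u^2$ lie on one line through $v$ and $u^3, u^4$ on the perpendicular line.

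I would then isolate two elementary facts. \emph{(a) Invariance.} A $90^\circ$-rotation, a reflection, or a uniform rescaling carries axis-parallel grid paths to axis-parallel grid paths and preserves the number of bends of every polygonal path; hence for each $i$ and each leaf $f \in C_i$ the $v$-to-$f$ path has the same number of bends in $\M$ and in $\M'$. Consequently $b^\ell_{\M'}(v, u^i) = b^\ell_\M(v, u^i)$ for every $i$, so $b^i_{\M'}(v) = b^i$, and every leaf-to-leaf path lying entirely inside one $C_i$ keeps its bend count. \emph{(b) Additivity at $v$.} If a leaf-to-leaf path runs through $v$, joining a leaf $f \in C_i$ to a leaf $f' \in C_j$ with $i \ne j$, its number of bends equals $b_\M(v,f) + b_\M(v,f') + \varepsilon$, where $\varepsilon = 1$ if $vu^i$ and $vu^j$ are perpendicular and $\varepsilon = 0$ if they are collinear — the bends of the two halves sit at pairwise distinct vertices, all different from $v$, and concatenation creates at most the single new bend at $v$. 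The same identity holds in $\M'$.

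Combining the two: a leaf-to-leaf path through $v$ in $\M'$, from $f \in C_i$ to $f' \in C_j$, has at most $b^i_{\M'}(v) + b^j_{\M'}(v) + 1 = b^i + b^j + 1 \le b^1 + b^2 + 1$ bends. For a matching lower bound on $b(\M)$ I would pick leaves $f_1 \in C_1$, $f_2 \in C_2$ with $b_\M(v, f_1) = b^1$ and $b_\M(v, f_2) = b^2$; their connecting path runs through $v$, and since $v$ is not balanced the edges $vu^1, vu^2$ are perpendicular in $\M$, so by (b) this path has exactly $b^1 + b^2 + 1$ bends, whence $b(\M) \ge b^1 + b^2 + 1$. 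Thus every leaf-to-leaf path through $v$ has at most $b(\M)$ bends in $\M'$, and by (a) every leaf-to-leaf path avoiding $v$ has the same number of bends in $\M'$ as in $\M$, hence at most $b(\M)$. Taking the maximum over all leaf-to-leaf paths of $T$ yields $b(\M') \le b(\M)$.

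Everything reduces to the two facts above, and of these (b) is the one needing care: one must argue precisely that splitting a grid path at an interior vertex loses no bend of either half and adds a bend exactly when the two incident edges are perpendicular; after that the conclusion is just a short chain of inequalities. I would also remark in passing — since it is implicit in the description of the balance step — that the rearrangement can always be realized with a valid, non-self-intersecting s-model, which does not affect any bend count.
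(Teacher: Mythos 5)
Your proof is correct and follows essentially the same route as the paper's: observe that only leaf-to-leaf paths through $v$ can change, that the worst such path in $\M$ has exactly $b^1+b^2+1$ bends because $u^1,u^2$ are perpendicular at $v$, and that after balancing every path through $v$ has at most $b^1+b^2+1$ bends. You simply make explicit two facts the paper leaves implicit (invariance of bend counts under the rigid motions of the subtrees, and the additivity of bends at $v$), which is a welcome amount of extra care rather than a different argument.
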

\begin{proof}
Let $b_{\M}(v)$ denote the maximum number of bends found in leaf-to-leaf paths in $\M$ that contain $v \in V(T)$. Let $b^i = b^i_{\M}(v)$ and $u^i = u^i_{\M}(v)$ for all $1 \leq i \leq 4$. Since $v$ is not balanced in $\M$, then $u^1$ and $u^2$ are not aligned in $\M$. Therefore, $b_{\M}(v) = b^1+b^2+1$. On the other hand, $b_{\M'}(v) = \max\{b^1+b^2, b^1+b^3+1\} \leq b^1+b^2 + 1 = b_{\M}(v)$. After the balance, only the paths that contain $v$ have their number of bends changed. Thus, $b(\M') \leq b(\M)$. 
\end{proof}
\vspace{2mm}
\begin{lem} \label{lem:critico}
    Let $\M'$ be the s-model after the call of \textsc{Balance}($\M$, $p$, $v$) of Algorithm~\ref{alg}, for some $(p,v) \in E(T)$ and $\M \in \M(T)$. If $p  = u^1_{\M}(v)$, then $(p,v)$ is critical in $\M'$.
\end{lem}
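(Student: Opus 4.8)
The plan is to prove this by induction on $n = |V(T(p,v))|$, mirroring the structure of the definition of ``critical'' and the proof of Lemma~\ref{lem:vcritical}. The statement to establish is that after \textsc{Balance}($\M$, $p$, $v$), in the resulting s-model $\M'$ the pair $(p,v)$ is critical, under the hypothesis that $p = u^1_{\M}(v)$ — i.e. that among the neighbors of $v$, the one on the side of $p$ is (tied for) the most bend-heavy direction. First I would unwind what \textsc{Balance}($\M$, $p$, $v$) does: it recursively calls \textsc{Balance}($\M$, $v$, $u$) for each $u \in N(v) \setminus \{p\}$, and then performs the balance step at $v$ itself if $v$ is not already balanced. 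A key preliminary observation is that each recursive call \textsc{Balance}($\M$, $v$, $u$) is exactly of the form covered by the lemma with the roles $(p,v) \leftarrow (v,u)$, but only when $v = u^1(u)$ in the s-model seen at the time of that call; so I will need to argue that for the child $u$ that ends up as $u^1_{\M'}(p,v)$ (and, in the second case of the definition, also the one that ends up as $u^2_{\M'}(p,v)$), the hypothesis of the lemma is met for that recursive call, so that the inductive hypothesis applies and gives that $(v,u)$ is critical in the model right after that recursive call. One must also check that the subsequent balance step at $v$, which only rearranges/rotates/rescales the four subtrees rooted at the $u^i(v)$, does not change any $b^{\ell}$ value strictly inside those subtrees, hence preserves criticality of the pairs $(v, u^i)$ established by the recursion.

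The core of the argument is the case analysis at $v$ after the recursion, using the fact that $p = u^1_{\M}(v)$. Write $u^i = u^i_{\M'}(p,v)$ and $b^i = b^i_{\M'}(p,v)$ for the neighbors of $v$ other than $p$, ordered by their $b^{\ell}_{\M'}$ values; these are the same vertices as $u^2_{\M}(v), u^3_{\M}(v), u^4_{\M}(v)$ up to the reordering caused by the balance step, and since $p = u^1(v)$ we have $b^{\ell}_{\M'}(v,p) \geq b^i$ for the relevant indices. Now $b^{\ell}_{\M'}(p,v)$ is, by definition, the max over leaf paths through $v$ starting at $p$ of their bend count; such a path goes from $p$ through $v$ into some subtree rooted at a $u^i$, and whether it bends at $v$ depends on alignment. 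If $v$ is balanced in $\M'$ (which is the state the balance step leaves it in), then $u^1$ is aligned with $p$ or with $u^2$. In the sub-case where $u^1$ is aligned with $p$: a path $p$–$v$–$u^1$ does not bend at $v$, so $b^{\ell}_{\M'}(p,v) = b^1 = b^1_{\M'}(p,v)$, and by the inductive step $(v,u^1)$ is critical in $\M'$, so $(p,v)$ is critical by the first sub-clause of the definition. In the sub-case where $u^1$ is aligned with $u^2$ (and hence both are perpendicular to the $p$–$v$ direction): any path from $p$ into the $u^1$ or $u^2$ subtree bends at $v$, giving $b^{\ell}_{\M'}(p,v) = b^1 + 1$; since the balance step makes $b^1 = b^2$ in this situation (it balances precisely so that the two heaviest are equal — or I should double check this follows from the description, possibly $b^1 \in \{b^2, b^2+1\}$ and balancing is only triggered/effective when they'd be $b^2$), this is $b^2_{\M'}(p,v) + 1$, and with $(v,u^1), (v,u^2)$ both critical by the recursion we land in the second sub-clause. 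The degenerate cases where $v$ has few neighbors (so some $u^i = \emptyset$ with $b^i = -1$) and the base case $n = 2$ (where $b^{\ell} = 0$ and $(v,\emptyset)$ is critical trivially) should be handled up front.

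The main obstacle I anticipate is the bookkeeping around \emph{which} s-model each quantity refers to: \textsc{Balance} mutates $\M$ in place through the recursion, so ``$p = u^1_{\M}(v)$'' is a hypothesis about the model at entry, whereas the conclusion is about $\M'$ at exit, and the recursive calls see intermediate models. I would need a clean invariant — something like: after \textsc{Balance}($\M$, $v$, $u$) returns, the value $b^{\ell}(v,u)$ has not increased and equals $b^{\ell}$ of the subtree with $u$ as its heaviest direction — to chain the recursion correctly, and I would lean on Lemma~\ref{lem:naoaumenta} to control that balance steps never increase bend counts. A second, smaller subtlety is verifying that the balance step, when it rearranges the four subtrees at $v$, does not disturb the ordering assumption or the criticality of the pairs coming out of the recursion in the subtrees that get rotated; since rotation and rescaling are bend-preserving on each individual subtree (bends are a combinatorial, not metric, feature, and a rigid rotation maps straight segments to straight segments), this should go through, but it needs to be said explicitly.
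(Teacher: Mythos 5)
Your proposal follows essentially the same route as the paper's proof: induction on $|V(T(p,v))|$, checking that the hypothesis ($v = u^1(w)$) propagates to each recursive call so that the child pairs $(v,w)$ come out critical, and then observing that the balance step aligns $p$ (which remains $u^1(v)$ after the recursion, since $b^{\ell}(v,p)$ is unchanged while the other directions can only decrease) with $u^1(p,v)$, giving $b^{\ell}_{\M'}(p,v) = \max\{b^1_{\M'}(p,v),\, b^2_{\M'}(p,v)+1\}$ and hence criticality of $(p,v)$ by one of the two clauses of the definition. The only wobble is your second sub-case (where $u^1(p,v)$ is aligned with $u^2(p,v)$ rather than with $p$): since $p = u^1(v)$, balancing forces $p$ to be aligned with $u^1(p,v)$ except possibly under ties, and in the tie situation $b^1(p,v)=b^2(p,v)$ so your conclusion $b^{\ell}_{\M'}(p,v)=b^2_{\M'}(p,v)+1$ still lands in the second clause, exactly as you suspected.
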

\begin{proof}
Let $\M'$ be the s-model being transformed by the call of the procedure and its recursive calls, until its final state when the first procedure call returns. The proof is by induction on $n = |V(T(p,v))|$. If $n = 2$, then $(p,v)$ is trivially critical. Assume $n > 2$ and that the result holds for any such a tree $T(p,v)$ with less than $n$ vertices. The first part of the procedure is to call it recursively for each $(v,w)$, where $w \in N(v) \setminus \{p\}$. Note that, since $\M'$ is equal to $\M$ initially and $p = u^1_{\M}(v)$, we have that $p = u^1_{\M'}(v)$, and therefore $b^\ell_{\M'}(w,v) \geq b^\ell_{\M'}(v,p) \geq b^\ell_{\M'}(v,w) \geq  b^\ell_{\M'}(w, w')$ for all $w' \in N(w) \setminus \{v\}$. Thus, $v  = u^1_{\M'}(w)$. Since $T(v,w)$ has less vertices than $T(p,v)$, we have by induction hypothesis that $(v,w)$ is critical in $\M'$ after the call of \textsc{Balance}($\M'$, $v$, $w$). Let $u^i = u^i_{\M'}(v)$ and $b^i = b^i_{\M'}(v)$, for all $1 \leq i \leq 4$ right before the balance step. Since $b^\ell_{\M}(v,p) = b^\ell_{\M'}(v,p)$, while $b^i_{\M'}(p,v) \leq b^i_{\M}(p,v)$ for all $1 \leq i \leq 3$, then $p = u^1$. Thus, $p$ is aligned with $u^1_{\M'}(p,v)$ and thus $b^{\ell}_{\M'}(p,v) = \max \{b^1_{\M'}(p,v), b^2_{\M'}(p,v)+1\}$. As both $(v, u^1_{\M'}(p,v))$ and $(v, u^2_{\M'}(p,v))$ are critical, then $(p,v)$ is critical.
\end{proof}
\vspace{2mm}
\begin{thm}
Given a tree $T$, let $\M$ be the s-model produced by the execution of Algorithm~\ref{alg} on input $T$. Then, $b(\M) = b(T)$.
\end{thm}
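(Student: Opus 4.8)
The inequality $b(\M)\ge b(T)$ is immediate from the definition of $b(T)$, so the whole content is to prove $b(T)\ge b(\M)$. My plan is first to upgrade Corollary~\ref{cor:lowerbound} to a statement about edges, and then to locate, inside the model $\M$ produced by the algorithm, an edge realizing the upgraded bound. For an edge $(p,v)\in E(T)$ and any $\M'\in\M(T)$, I would note that concatenating a heaviest $p$-to-leaf path through $v$ with a heaviest $v$-to-leaf path through $p$ gives a leaf-to-leaf path crossing the edge $pv$ whose number of bends is exactly $b^{\ell}_{\M'}(p,v)+b^{\ell}_{\M'}(v,p)$ — the bend at $v$ is counted by the first term, the bend at $p$ by the second, and the two halves lie on opposite sides of $pv$ and so can be chosen independently. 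Hence $b(\M')\ge b^{\ell}_{\M'}(p,v)+b^{\ell}_{\M'}(v,p)$ for every $\M'$, so $b(T)\ge b^{\ell}_T(p,v)+b^{\ell}_T(v,p)$; and by Lemma~\ref{lem:vcritical} applied to each orientation, if $(p,v)$ and $(v,p)$ are \emph{both} critical in $\M$ then $b(T)\ge b^{\ell}_{\M}(p,v)+b^{\ell}_{\M}(v,p)$. So it suffices to find an edge $(p^{*},v^{*})$ of $T$ that is critical in both orientations in $\M$ and satisfies $b^{\ell}_{\M}(p^{*},v^{*})+b^{\ell}_{\M}(v^{*},p^{*})=b(\M)$; note a single critical pair will not do in general, since a heaviest leaf-to-leaf path may split its bends between the two sides of every vertex.

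I would find this edge in three steps. (1) Show that in $\M$ every vertex is balanced: a balance step at a vertex $w$ only rearranges the drawings of the subtrees hanging at $w$, and I would move each such subtree by a rigid motion (rotation/reflection) composed with a rescaling that lands its attachment edge on the prescribed grid line; this preserves the alignment pattern inside the subtree, hence keeps all of its vertices balanced, while making $w$ balanced. Since a newly inserted leaf is trivially balanced and the ensuing call of \textsc{Balance} visits and balances every vertex, "all vertices balanced" is maintained from one iteration to the next. (2) Establish the structural fact: for every vertex $v$, writing $p=u^{1}_{\M}(v)$, the pair $(p,v)$ is critical in $\M$. The engine is Lemma~\ref{lem:critico} — a call \textsc{Balance}$(\cdot,p,v)$ with $p=u^{1}(v)$ in the current model makes $(p,v)$ critical, and its proof in fact shows the hypothesis propagates to all descendants, so the whole subtree on the $v$-side becomes internally critical — together with the observation that criticality of $(p,v)$ is a property of the alignment pattern of the component of $T$ on the $v$-side of $p$, which no later balance step disturbs (again, only rigid motions act on it); hence a pair stays critical once it becomes critical. (3) Starting from a vertex $v^{*}$ with $b_{\M}(v^{*})=b(\M)$, follow the walk $v^{*}\to u^{1}_{\M}(v^{*})\to u^{1}_{\M}(u^{1}_{\M}(v^{*}))\to\cdots$; in a finite tree it must eventually oscillate along a single edge $p^{*}v^{*}$ with $p^{*}=u^{1}_{\M}(v^{*})$ and $v^{*}=u^{1}_{\M}(p^{*})$, and by (2) both orientations of $p^{*}v^{*}$ are critical.

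To see the edge attains the bound I would use balancedness: for a vertex $u$ with heaviest neighbour $u'=u^{1}_{\M}(u)$ one has $b_{\M}(u)=b^{1}_{\M}(u)+\max\{b^{2}_{\M}(u),\,b^{3}_{\M}(u)+1\}$, whose first summand is $b^{\ell}_{\M}(u,u')$ and — since the partner of $u'$ at $u$ is $u^{2}_{\M}(u)$ — whose second summand is $b^{\ell}_{\M}(u',u)$. Thus $b_{\M}(u)$ is the maximum number of bends over leaf-to-leaf paths crossing the edge $uu'$, so $b(\M)=\max_{u}b_{\M}(u)$ is realized by such an edge, and tracing the walk from a maximizing $v^{*}$ I would check that the oscillation edge $p^{*}v^{*}$ is one of the realizing edges, i.e.\ $b^{\ell}_{\M}(p^{*},v^{*})+b^{\ell}_{\M}(v^{*},p^{*})=b(\M)$. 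Plugging this into the upgraded corollary yields $b(T)\ge b(\M)$, hence $b(\M)=b(T)$.

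The step I expect to be the real obstacle is (2): proving that $(u^{1}_{\M}(v),v)$ is critical for \emph{every} $v$. The hypothesis of Lemma~\ref{lem:critico} fails for the outermost call \textsc{Balance}$(\M,v_{i},p_{i})$ in each iteration — there $v_{i}$ has just been inserted as a leaf, hence is a lightest neighbour of $p_{i}$ — so one has to descend into the recursion and argue, from the post-order structure of the traversal and the fact that the model remains balanced, that every heaviest-neighbour edge is eventually produced by some recursive call whose hypothesis does hold and then survives the remaining iterations. Pinning down this bookkeeping — including ties in the ordering of the $u^{i}_{\M}$ and independence from the chosen incremental construction sequence — is the technical heart of the argument.
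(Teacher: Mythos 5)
Your step (2) is where the argument is incomplete, and you say so yourself; but it is worth being precise about why it is not a routine bookkeeping exercise and why the paper does not need it. You want the global statement that in the final model $\M$ the pair $(u^1_{\M}(v),v)$ is critical for \emph{every} $v$. The only tool available is Lemma~\ref{lem:critico}, whose hypothesis ($p=u^1_{\M}(v)$ at the moment \textsc{Balance}($\M,p,v$) is invoked) fails for the top-level call of every iteration and, more seriously, for any recursive call \textsc{Balance}($\M,w,v$) in which the traversal parent $w$ happens not to be the heaviest neighbour of $v$ --- which is the generic situation for vertices whose heavy branch points away from the newly inserted leaf. So most of the pairs you need are never certified critical by any call of the procedure, and you would have to prove a genuinely new invariant (e.g.\ that criticality of $(u^1(v),v)$, once established in some earlier iteration, survives all later insertions and balance steps, including the case where the identity of $u^1(v)$ itself changes when a new subtree is grafted nearby). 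Nothing in the paper's lemmas supplies this, and it is the entire content of the lower bound.

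The paper avoids the global claim by an outer induction on the insertion sequence: assuming $b(\M')=b(T')$ for the penultimate model, the only nontrivial case is $b(\M)=b(\M')+1$, and then every maximum-bend path $P$ must end at the newly inserted leaf $v_{n-1}$. That extremality is exactly what makes the hypothesis of Lemma~\ref{lem:critico} checkable (the paper verifies $w_i=u^1_{\M}(z)$ for the relevant neighbours $z$ using maximality of $P$), and criticality is then propagated bend-by-bend along $P$ by an inner induction. Note also that your preliminary worry --- ``a single critical pair will not do in general'' --- is unfounded: the paper anchors its critical pair at the \emph{last} bend vertex $w_k$ of $P$, oriented backwards towards $v_{n-1}$, so that $b^{\ell}_{\M}(w'_k,w_k)=k$ already counts all $k$ bends and Corollary~\ref{cor:lowerbound} applies as stated. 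Your two-sided upgrade of the corollary is correct, and your steps (1) and (3) locating an edge $p^{*}v^{*}$ with $u^1_{\M}(p^{*})=v^{*}$ and $u^1_{\M}(v^{*})=p^{*}$ realizing $b(\M)$ are sound, but the upgrade forces you to certify criticality in \emph{both} orientations of an edge whose location is known only a posteriori --- precisely the part you have not proved. As it stands the proposal is a genuinely different and plausible route, but with its central step open.
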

\begin{proof}
The proof is by induction on the number of vertices $n$ of $T$. If $n=1$, then $\M$ consists of just a vertex $v_0$, and $b(\M) = b(T) = 0$. Suppose, the theorem holds for any input tree having less than $n>1$ vertices. Let $T'$ be the tree before the addition of the last vertex $v_{n-1}$ and $\M'$ be the s-model of $T'$ produced so far. By hypothesis induction, $b(\M') = b(T')$. If $b(\M) = b(\M')$, then $b(\M) = b(T') \leq b(T)$ and, thus, $b(\M) = b(T)$. If $b(\M) > b(\M')$, then the number of bends cannot have increased by more than $1$ unit, since the addition of $v_{n-1}$ could have increased the maximum number of bends, but by Lemma~\ref{lem:naoaumenta}, the balance step cannot increase it. Thus, $b(\M) = b(\M')+1$ and all paths that bend the most (say, $k$ bends) have an extreme vertex on $v_{n-1}$. Given a path $P$ with $k$ bends, walk through the vertices of $P$ from $v_{n-1}$ to the other extreme vertex of $P$ and let $w_0, w_1, w_2, \ldots, w_k$ be the vertices such that $w_0 = v_{n-1}$ and the others are those in which $P$ bends, ordered according to such a walk. Let $w'_i$ be the vertex that immediately succeeds $w_i$ in this walk over $P$ (note that $w_k$ must be an internal vertex of $P$, and thus $w'_i$ is well defined for all $0 \leq i \leq k$). We claim that, for all $0 \leq i \leq k$, 
\begin{enumerate}[(i)]
    \item $(w'_i, w_i)$ is critical
    \item $b^{\ell}_{\M}(w'_i,w_i) = i$
\end{enumerate}
and since $b^{\ell}_{\M}(w'_k,w_k) = k$ and $(w'_k,w_k)$ is critical by such a claim, we have by Corollary~\ref{cor:lowerbound} that $b(T) \geq b^{\ell}_{\M}(w'_k,w_k) = k$. Since $b(T) \leq b(\M) = k$, then $b(T) = k = b(\M)$, concluding the proof. 

It remains to prove the claim, which is done by induction on $i$. For $i=0$, we have that $w'_0 = p_{n-1}$ and $( p_{n-1},  v_{n-1})$ is clearly critical; besides, $b^{\ell}_{\M}(p_{n-1},v_{n-1}) = 0$, validating the claim. So, assume $0 < i \leq k$ and the claim holds for all values less than $i$. 

Since $w_i$ is a bend of $P$, $d(w_i) \geq 2$. Moreover, $d(w_i) \neq 2$ or, otherwise, the balance step of the algorithm on the parameter $v = w_i$ would have eliminated such a bend. Therefore, $d(w_i) \geq 3$. Again, since $w_i$ is a bend, and there are no bends in $P$ between $w_i$ and $w_{i-1}$, we have that $b^{\ell}_{\M}(w'_i,w_i) \geq b^{\ell}_{\M}(w'_{i-1},w_{i-1}) + 1$. By induction hypothesis, $b^{\ell}_{\M}(w'_{i-1},w_{i-1}) = i-1$ and, thus, $b^{\ell}_{\M}(w'_i,w_i) \geq i$. On the other hand, as $P$ has exactly $i$ bends from $w_i$ to $w_0$, there is no other way to continue the subpath of $P$ from $w_k$ to $w_i$ up to a leaf in $\M$ using more than $i$ bends in order to not contradict the maximality of $P$ in number of bends. As a consequence, $b^{\ell}_{\M}(w'_i,w_i) \leq i$, proving property (ii) of the claim.

Let $w''_i$ be the vertex that immediately precedes $w_i$ in $P$. Let $z \in N(w_i) \setminus \{w'_i, w''_i\}$. Due to the maximality in the number of bends of $P$, for all $z' \in N(z) \setminus \{w_i\}$, we have that $b^{\ell}_{\M}(z, z') \leq b^{\ell}_{\M}(w_i, z) \leq \min \{b^{\ell}_{\M}(w_i,w'_i), b^{\ell}_{\M}(w_i,w''_i)\} \leq b^{\ell}_{\M}(z, w_i)$ and, thus, $w_i = u^1_{\M}(z)$. Therefore, after the calling of \textsc{Balance}($\M$, $w_i$, $z$), we have that $(w_i, z)$ is critical by Lemma~\ref{lem:critico}. On the other hand, by induction hypothesis, $b^{\ell}_{\M}(w'_{i-1},w_{i-1}) = i-1$ and $(w'_{i-1},w_{i-1})$ is critical. Let $P'$ be the directed path from $w_i$ to $w'_{i-1}$. Since $P'$ has no bends, and since $b^{\ell}_{\M}(w_i,w'') = i-1$, then $b^{\ell}_{\M}(a,b)$ for all $(a,b) \in P'$. As   $(w'_{i-1},w_{i-1})$ is critical, so is $b^{\ell}_{\M}(a,b) = i-1$ for all $(a,b) \in P'$ (in particular, $(w_i,w'')$). As both $(w_i, u^1_{\M'}(w'_i,w_i))$ and $(w_i, u^2_{\M'}(w'_i,w_i))$ are critical, then $(w'_i,w_i)$ is critical, proving property (i) of the claim.
\end{proof}
Therefore, the correctness of the algorithm holds. Now, we turn our attention to some other properties regarding the number of bends of trees.

Consider a graph $G=(V, E)$, $V'\subset V$ and $E'\subset V\times V$. We denote $G[V\setminus V']$ by $G\setminus V'$ and $(V, E\cup E')$ by $G\cup E'$. Let $r\in V(T)$ such that $d(r)=2$, and $\M\in\M(T)$ a balanced s-model of $T$. Let $r_1$ and $r_2$ be the neighbors of $r$ in $T$. We define $T^f_0$ as the tree with a single vertex and, for all $k\geq1$, $T^{f}_{k}$ is the tree rooted in $r$ such that $b^{\ell}_{\M}(r, r_1)=b^{\ell}_{\M}(r, r_2)=k-1$ and the number of vertices of $T$ is the least possible. That is, $$T^{f}_{k}=\argmin_{\text{$T$ is a tree} }\{|V(T)|\mid r\in V(T),\, N(r)=\{r_1, r_2\},\, \M\in\M(T),\text{ $\M$ is balanced},\, b^{\ell}_{\M}(r, r_1)=b^{\ell}_{\M}(r, r_2)=k-1\}\, .$$

A rooted \emph{binary tree} $T$ is an ordered tree with root $r$ in which every vertex has at most two children. The \emph{height} of $T$ is the number of edges in the longest existing path that has as extreme vertices $r$ and a leaf of $T$. An \emph{interior vertex} of $T$ is a vertex that is not a leaf of $T$. Consider $u,v\in V(T)$. The \emph{distance} from $u$ to $v$ is the length of the path connecting $u$ and $v$ in $T$. The \emph{level} of a vertex $v\in V(T)$ is the distance from $v$ to $r$. A \emph{full binary tree} is a binary tree in which every interior vertex has exactly two children and all leaves are at the same (highest) level. We denote by $T^{b}_{k}$ the full binary tree with height $k$.

Consider a path $P=u,\ldots,v$ in a tree $T$ and let $T'$ be a subtree of $T$. We say that $P$ \emph{follows through} $T'$ if, and only if, either $u\in V(T')$ or $v\in V(T')$.
\vspace{2mm}
\begin{lem} \label{lema_Tfk}
For all $k\geq 0$, $T^{f}_{k}\cong T^{b}_{k}$.
\end{lem}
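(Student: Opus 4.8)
The plan is to prove a slightly more general claim about subtrees by induction and then specialize it. Call a rooted tree $S$ with root $s$ \emph{$j$-realizable} if $s$ can be given one extra ``parent'' neighbour $p$ so that $S\cup\{p\}$ admits a balanced s-model $\M$ with $b^{\ell}_{\M}(p,s)=j$ and $b^{\ell}_{\M}(s,p)\geq j$. I will show that every $j$-realizable $S$ has $|V(S)|\geq 2^{j+1}-1$, with equality only if $S\cong T^{b}_{j}$, and that $T^{b}_{j}$ is itself $j$-realizable. Granting this, the lemma follows: for $k\geq 1$, in a balanced s-model of $T^{f}_{k}$ the vertex $r$ has only the two real neighbours $r_1,r_2$, which therefore form the pair $\{u^{1}(r),u^{2}(r)\}$ and are aligned, so $r$ does not bend; hence $b^{\ell}_{\M}(r_i,r)=b^{\ell}_{\M}(r,r_{3-i})=k-1$, and the subtree hanging at each $r_i$ is $(k-1)$-realizable. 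By the claim each of these two subtrees has at least $2^{k}-1$ vertices, so $|V(T^{f}_{k})|\geq 1+2(2^{k}-1)=2^{k+1}-1=|V(T^{b}_{k})|$; since $T^{b}_{k}$ is a legitimate competitor this is an equality, which forces each subtree to be $T^{b}_{k-1}$ and hence $T^{f}_{k}\cong T^{b}_{k}$. The case $k=0$ is immediate from the definitions.

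To prove the claim I would induct on $j$, with an inner induction on $|V(S)|$. The crucial point is how balancedness constrains the incoming edge at $s$. Since $b^{\ell}_{\M}(s,p)\geq j$ while every child $c$ of $s$ satisfies $b^{\ell}_{\M}(s,c)\leq j$ (otherwise the path from $p$ through $s$ and then $c$ would already have more than $j$ bends), $p$ is a highest-valued neighbour of $s$; balancedness then makes $p$ aligned with a highest-valued child of $s$, so the path from $p$ through $s$ continuing into that child does not bend at $s$, whereas it does bend at $s$ when continued through any other child. Writing $a_1\geq a_2\geq\cdots$ for the values $b^{\ell}_{\M}(s,c)$ of the children of $s$, this gives $j=b^{\ell}_{\M}(p,s)=\max\{a_1,\,1+a_2\}$, with the conventions that $1+a_2=0$ when $s$ has exactly one child and that the right side is $0$ when $s$ is a leaf.

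A short case analysis then finishes the claim. If $s$ is a leaf, then $j=0$ and $|V(S)|=1=2^{1}-1$. If $a_1=j$, the subtree rooted at the corresponding child is itself $j$-realizable (its parent-side value is at least $b^{\ell}_{\M}(s,p)\geq j$), so by the inner induction it already has at least $2^{j+1}-1$ vertices and $S$ is strictly larger than $2^{j+1}-1$. Otherwise $\max\{a_1,1+a_2\}=j$ with $a_1\neq j$ forces $a_1=a_2=j-1$; the two corresponding subtrees are each $(j-1)$-realizable (a child's parent-side value is at least $\max\{b^{\ell}_{\M}(s,p),\,1+(j-1)\}\geq j$), so by the outer induction each has at least $2^{j}-1$ vertices, giving $|V(S)|\geq 1+2(2^{j}-1)=2^{j+1}-1$; any additional child of $s$ only adds vertices, so equality forces $s$ to have exactly these two children with the two subtrees isomorphic to $T^{b}_{j-1}$, i.e.\ $S\cong T^{b}_{j}$. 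Finally, that $T^{b}_{j}$ is $j$-realizable is witnessed by the natural model: place the root's two children collinear through it, and recursively, at every internal vertex, place one child collinear with the incoming edge and the other on the perpendicular grid line; one checks that every vertex is balanced and that, for a subtree of height $h$ entered aligned with its collinear child, the ``$b^{\ell}$ from the parent through the root'' equals $h$.

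The step I expect to be the main obstacle is keeping the invariant ``parent-side value $\geq j$'' honest through the recursion: one must verify that, because the incoming edge at $s$ was aligned precisely with a heaviest child, that child (and, via a bend, the others) still see $p$'s side with a large enough $b^{\ell}$ value, and that ties among the $a_i$ and the handling of virtual neighbours do not open up a cheaper competitor. The rest is a routine double induction together with the explicit model above.
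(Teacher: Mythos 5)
Your argument is correct in substance but takes a genuinely different route from the paper's. The paper proves the lemma by induction on $k$ working directly with the minimal tree $T^{f}_{k}$: minimality is used as an exchange principle (if $d(r_1)=2$ suppress the vertex, if $d(r_1)=4$ delete a subtree, etc.) to force $d(r_1)=3$ and $b^{\ell}_{\M}(r_1,r_{11})=b^{\ell}_{\M}(r_1,r_{12})=k-2$, after which the two subtrees are identified with $T^{f}_{k-1}$ by minimality again. You instead prove a universal lower bound with a characterization of equality---every ``$j$-realizable'' rooted subtree has at least $2^{j+1}-1$ vertices, with equality exactly for $T^{b}_{j}$---and then specialize. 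Your organization buys something: it avoids having to exhibit a strictly smaller competitor at each step (the delicate part of the paper's argument, since one must verify that each modified tree still admits a balanced s-model with the required $b^{\ell}$ values), it yields the corollary $|V(T^{f}_{k})|=2^{k+1}-1$ at once, and the same realizability bound is essentially what is needed again in Theorem~\ref{teorema1}. The structural core is the same in both proofs: balancedness at a vertex whose incoming value is at least $j$ forces either one child of value $j$ or two children of value $j-1$, giving the recursion $|V|\geq 1+2(2^{j}-1)$.

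One wrinkle you should repair before this counts as a complete proof: as literally stated, your definition of $j$-realizable is unsatisfiable for $j\geq 1$. If $p$ is a single pendant vertex attached to $s$, then the only leaf reachable from $s$ through $p$ is $p$ itself, so $b^{\ell}_{\M}(s,p)=0$ under the paper's definition, and the requirement $b^{\ell}_{\M}(s,p)\geq j$ fails; it would in particular disqualify your own witness that $T^{b}_{j}$ is $j$-realizable. What you actually use that hypothesis for is its alignment consequence, so the clean fix is either to quantify over balanced s-models of an ambient tree in which $S$ sits as the branch at $s$ away from $p$ (which is how you in fact apply the notion, both at the top level inside $T^{f}_{k}$ and in the recursive step), or to replace the condition $b^{\ell}_{\M}(s,p)\geq j$ by what it is meant to enforce, namely that $p$ may be taken as $u^{1}(s)$, i.e.\ that $p$ is aligned at $s$ with a child of maximum $b^{\ell}_{\M}(s,\cdot)$-value. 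With that adjustment, together with a careful treatment of ties among equal-valued neighbours in the definition of balanced (which you rightly flag as the main obstacle, and which also needs care in establishing $j=\max\{a_1,\,1+a_2\}$), the double induction goes through.
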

\begin{proof}
By induction on $k$. If $k=0$, then $T^{f}_{0}$ is a tree with a single vertex and, thus, $T^{f}_{0}\cong T^{b}_{0}$. If $k=1$, then $b^{\ell}_{\M}(r, r_1)=b^{\ell}_{\M}(r, r_2)=0$ and, thus, $r_1$ and $r_2$ are leaves of $T^{f}_{1}$. Therefore, $T^{f}_{1}\cong T^{b}_{1}$. Assume that $T^{f}_{k'}\cong T^{b}_{k'}$, for all $1\leq k'<k$. Consider the neighbors of $r$, $r_1$ and $r_2$, in $T^{f}_{k}$. Since $T^{f}_{k}$ has the least number of vertices, we will show that $d(r_1)=3$. If $d(r_1)=1$, then $r_1$ is a leaf of $T^{f}_{k}$ and, thus, $b^{\ell}_{\M}(r, r_1)=0$. Therefore, $k=1$ (contradiction!). If $d(r_1)=2$, there exists $r_{11}\in N(r_1)\setminus\{r\}$, such that $b^{\ell}_{\M}(r, r_1)=b^{\ell}_{\M}(r, r_{11})=k-1$. And then, there exists $T=(T^{f}_{k}\setminus\{r_1\})\cup\{(r, r_{11})\}$, which contradicts the minimality of $|V(T^{f}_{k})|$. If $d(r_1)=4$, let $r_{1i}\in N(r_1)\setminus\{r\}$, for all $1\leq i\leq 3$. Note that the path $P$ with $k-1$ bends, having as extreme vertices $r$ and leaves of $T^{f}_{k}$ and containing $r_1$, must follow through $T(r_1, r_{1i})$, for some $i$. Without loss of generality, assume that $r_{11}$ and $r$ are aligned. If $P$ follows through $T(r_1, r_{11})$, there exists $T=T^{f}_{k}\setminus(V(T(r_1, r_{12}))\cup V(T(r_1, r_{13})))$, which contradicts the minimality of $|V(T^{f}_{k})|$. Else, if $P$ follows through $T(r_1, r_{12})$, there exists $T=T^{f}_{k}-V(T(r_1, r_{13}))$, which contradicts the minimality of $|V(T^{f}_{k})|$ and if $P$ follows through $T(r_1, r_{13})$, there exists $T=T^{f}_{k}-V(T(r_1, r_{12}))$, which, again, contradicts the minimality of $|V(T^{f}_{k})|$. Thus, $d(r_1)=3$. Let $r_{11}, r_{12}\in N(r_1)\setminus\{r\}$, such that $r_{11}$ is aligned with $r$. Note that the path having as extreme vertices $r$ and leaves of $T^{f}_{k}$, which contains $r_1$ and has $k-1$ bends, must follow through $T(r_1, r_{12})$. Otherwise, there would exist $T=T^{f}_{k}-V(T(r_1, r_{12}))$, such that $b^{\ell}_{\M}(r, r_{11})=b^{\ell}_{\M}(r, r_2)=k-1$, which contradicts the minimality of $|V(T^{f}_{k})|$. Thus, $b^{\ell}_{\M}(r_1, r_{12})=k-2$. Notice that $b^{\ell}_{\M}(r_1, r_{11})\geq k-2$. By the minimality of $|V(T^{f}_{k})|$, it follows that $b^{\ell}_{\M}(r_1, r_{11})=k-2$. Consider $T'=T(r, r_1)\setminus\{r\}$. Note that, in $T'$, $d(r_1)=2$ and $b^{\ell}_{\M}(r_1, r_{11})=b^{\ell}_{\M}(r_1, r_{12})=k-2$. By the minimality of $|V(T')|$, $T'\cong T^{f}_{k-1}$. By induction hypothesis, $T'\cong T^{b}_{k-1}$. Similarly, let $T''=T(r, r_2)\setminus\{r\}$. Then $T''\cong T^{b}_{k-1}$. Thus, $T^{f}_{k}=(V(T')\cup V(T'')\cup\{r\}, E(T')\cup E(T'')\cup\{(r, r_1), (r, r_2)\})$. Therefore, $T^{f}_{k}\cong T^{b}_{k}$.
\end{proof}

\begin{cor}
For all $k\geq 0$, $|V(T^{f}_{k})|=2^{k+1}-1$.
\end{cor}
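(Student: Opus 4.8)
The plan is to invoke Lemma~\ref{lema_Tfk} to replace $T^{f}_{k}$ by the full binary tree $T^{b}_{k}$ of height $k$, reducing the statement to the routine count $|V(T^{b}_{k})| = 2^{k+1}-1$. I would then carry out a short induction on $k$ that mirrors the recursive structure of full binary trees.

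For the base case $k=0$, the tree $T^{b}_{0}$ is a single vertex, so $|V(T^{b}_{0})| = 1 = 2^{0+1}-1$. For the inductive step, suppose $|V(T^{b}_{k-1})| = 2^{k}-1$. From the definition of a full binary tree given above, every interior vertex has exactly two children and all leaves lie at the common highest level $k$; hence deleting the root $r$ of $T^{b}_{k}$ yields two vertex-disjoint subtrees, each of which is a full binary tree of height $k-1$. Therefore $|V(T^{b}_{k})| = 1 + 2\,|V(T^{b}_{k-1})| = 1 + 2(2^{k}-1) = 2^{k+1}-1$. Combining this with Lemma~\ref{lema_Tfk}, which gives $T^{f}_{k}\cong T^{b}_{k}$, we conclude $|V(T^{f}_{k})| = 2^{k+1}-1$ for all $k\geq 0$.

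An alternative (equivalent) route is to count level by level: level $i$ of $T^{b}_{k}$ contains exactly $2^{i}$ vertices for $0\leq i\leq k$ (each level doubling the previous one, since every interior vertex has two children and no leaf occurs before level $k$), so that $|V(T^{b}_{k})| = \sum_{i=0}^{k} 2^{i} = 2^{k+1}-1$.

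I do not anticipate any genuine obstacle: once Lemma~\ref{lema_Tfk} is in hand, the result is a standard fact about complete binary trees. The only point deserving an explicit line is the extraction of the recursive (or level-wise) structure of $T^{b}_{k}$ directly from the stated definition of a full binary tree, which is immediate.
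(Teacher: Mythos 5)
Your proposal is correct and follows the same route as the paper: both invoke Lemma~\ref{lema_Tfk} to identify $T^{f}_{k}$ with the full binary tree $T^{b}_{k}$ and then use the count $|V(T^{b}_{k})|=2^{k+1}-1$, which the paper simply states as a known fact while you spell out the routine induction. No gap.
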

\begin{proof}
By Lemma~\ref{lema_Tfk}, $T^{f}_{k}\cong T^{b}_{k}$. Thus, $|V(T^{f}_{k})|=|V(T^{b}_{k})|=2^{k+1}-1$.
\end{proof}
 
Consider the problem of finding a tree $T$ with $k$ bends such that $|V(T)|$ is minimum. Let $T_k$ be a tree with $k$ bends defined in the following way:
\begin{itemize}
    \item For $k=0$, $T_k$ is a tree with a single vertex.
    \item For $k\geq 1$, consider the path $P=v_0, v_1,\ldots, v_k, v_{k+1}$ with $k$ bends. Therefore, each vertex $v_i$ is a bend of $P$, for all $1\leq i\leq k$. We define $T_k$ built from $P$ in the following way: for each $v_i$, add to $P$ a disjoint copy of the tree $T^{f}_{d_i}$, where $d_i=\min\{i-1, k-i\}$, and add the edge $(v_i, r)$, where $r$ is the root of $T^{f}_{d_i}$. 
\end{itemize}
See in Figure~\ref{fig:trees_Tk} two examples of $T_k$'s. We will show that $|V(T_k)|$ is minimum. Note that $|V(P)|=k+2$, thus $|V(T_k)| = (k+2)+ \sum_{i=1}^{k} |V(T^{f}_{d_i})|$, for $k\geq 1$. Note also that, $d_i=d_j$, for all $i$ and $j$ such that $i+j=k+1$. Therefore, consider the following cases,
\begin{itemize}
    \item[(i)] if $k$ is even,
    \begin{eqnarray*}
        |V(T_k)|&=&(k+2) + 2\cdot\sum_{i=1}^{\frac{k}{2}} |V(T^{f}_{i-1})|\\
        &=&(k+2) + 2\cdot\sum_{i=1}^{\frac{k}{2}} (2^i-1)\\
        &=&2^{\frac{k+4}{2}}-2\, .
    \end{eqnarray*}
    \item[(ii)] if $k$ is odd,
    \begin{eqnarray*}
        |V(T_k)|&=&(k+2) + \left|V\left(T^{f}_{\frac{k-1}{2}}\right)\right|+ 2\cdot\sum_{i=1}^{\frac{k-1}{2}} |V(T^{f}_{i-1})|\\
        &=&3\cdot2^{\frac{k+1}{2}}-2\, .
    \end{eqnarray*}
\end{itemize}
Thus,
    $$|V(T_k)|
    = \left\{\begin{array}{rll}
    2^{\frac{k+4}{2}}-2, & \hbox{if} & \hbox{$k$ is even} \\
    3\cdot2^{\frac{k+1}{2}}-2, & \hbox{if} & \hbox{$k$ is odd}\, .
    \end{array}\right.$$
\vspace{2mm}
\begin{thm} \label{teorema1}
Let $T$ be a tree such that $b(T)=k$. Then, $|V(T)| \geq |V(T_k)|$.
\end{thm}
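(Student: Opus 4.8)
The plan is to prove the lower bound $|V(T)| \ge |V(T_k)|$ by extracting, from any tree $T$ with $b(T)=k$, a substructure that forces at least $|V(T_k)|$ vertices. First I would fix an s-model $\M$ produced by Algorithm~\ref{alg}, so that $b(\M)=b(T)=k$, and locate a leaf-to-leaf path $P$ achieving $k$ bends. Walking along $P$ and using the bookkeeping from the correctness proof of the main theorem, the bend vertices $w_1,\dots,w_k$ of $P$ each satisfy $d(w_i)\ge 3$ (a degree-$2$ vertex would have been unbent by the balance step), and each pair along $P$ toward a leaf is critical with the bend count increasing by exactly one at each $w_i$. The key structural claim to establish is that at each bend $w_i$ there must hang a subtree $T_i$ off the neighbor of $w_i$ not on $P$, and that this subtree, together with criticality, forces $b^{\ell}$-values that make $T_i$ contain a balanced subtree realizing bend value $\min\{i-1,k-i\}-1$ on both sides — i.e., a copy of $T^f_{d_i}$ with $d_i=\min\{i-1,k-i\}$. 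This is exactly where Lemma~\ref{lema_Tfk} and its corollary ($|V(T^f_{d_i})| = 2^{d_i+1}-1$) enter: each such hanging subtree costs at least $|V(T^f_{d_i})|$ vertices.

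The argument for why $d_i = \min\{i-1,k-i\}$ is the natural obstruction goes as follows. At bend $w_i$, the path $P$ enters with $b^{\ell}$-value $i-1$ from the $w_{i-1}$ side (by the claim in the main theorem) and must also be able to reach a leaf on the far side with the remaining $k-i$ bends; criticality at $w_i$ means the third neighbor $z$ of $w_i$ must support a $b^{\ell}$-value of at least $\min\{i-1, k-i\} - 1$ going into $z$ (otherwise $w_i$ would not be a ``forced'' bend — one could realign to remove it or the path would not be maximal). Pushing this down recursively, $z$'s subtree must itself branch with two critical sides of equal $b^{\ell}$-value, which is precisely the defining property of $T^f_{\cdot}$ as the minimizer. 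So the subtree at $w_i$ has at least $|V(T^f_{d_i})|$ vertices. Summing: $|V(T)| \ge |V(P)| + \sum_{i=1}^k |V(T^f_{d_i})| = (k+2) + \sum_{i=1}^k (2^{d_i+1}-1)$, and this sum is exactly the closed form computed just before the theorem for $|V(T_k)|$, giving the result.

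The main obstacle I expect is making the structural claim fully rigorous: proving that the subtree hanging at each bend $w_i$ genuinely must contain a copy of (something at least as large as) $T^f_{d_i}$, rather than merely some subtree with a large $b^{\ell}$-value. The delicate point is the lower bound $d_i \ge \min\{i-1,k-i\}$ — one must argue that if the hanging subtree were cheaper, then either a balance operation at $w_i$ would eliminate the bend (contradicting that $\M$ is the algorithm's output with $b(\M)=k$), or there is an alternative leaf-to-leaf path in $T$ with more than $k$ bends (contradicting $b(T)=k$). I would handle this by a careful case analysis on how $P$ interacts with the subtree at $w_i$, mirroring the case analysis in the proof of Lemma~\ref{lema_Tfk} (where minimality forced $d(r_1)=3$ and the path to follow through a specific subtree), and then invoke Lemma~\ref{lema_Tfk} to replace ``subtree with equal critical $b^{\ell}$-values on both sides of value $d_i-1$'' by ``$\cong T^b_{d_i}$, hence $2^{d_i+1}-1$ vertices.'' A secondary, purely bookkeeping obstacle is verifying the disjointness of the $k$ hanging subtrees and of $P$, so that the vertex counts genuinely add; this follows because the subtrees hang off distinct non-$P$ neighbors of distinct bend vertices, but it should be stated explicitly. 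Once these pieces are in place, the arithmetic identity with the pre-computed formula for $|V(T_k)|$ closes the proof.
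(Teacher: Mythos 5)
Your proposal takes essentially the same route as the paper's proof: fix a balanced optimal s-model, take a leaf-to-leaf path $P$ with $k$ bends, show that each bend vertex $v_i$ (necessarily of degree at least $3$) has a third neighbor $w_i$ supporting a hanging subtree with at least $|V(T^f_{d_i})|$ vertices where $d_i=\min\{i-1,k-i\}$, and sum these disjoint contributions together with $|V(P)|\geq k+2$. The only cosmetic difference is that the paper obtains the key lower bound on each hanging subtree directly from balancedness of the model combined with the minimality defining $T^f_{d_i}$ (via Lemma~\ref{lema_Tfk}), rather than from criticality, exactly as you anticipate in your discussion of the remaining obstacles.
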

\begin{proof}
Let $T$ be a tree with a balanced s-model $\M$, such that $b(T)=k$ and $P$ a path with $k$ bends in $T$. Let $v_1, v_2,\ldots, v_k$ be the vertices in which $P$ bends. Since $\M$ is balanced, $d(v_i) \geq 3$ for all $1 \leq i \leq k$. Note that $b^{\ell}_{\M}(v_i, v_{i-1})=i-1$ and $b^{\ell}_{\M}(v_i, v_{i+1})=k-i$, for all $v_i$. We claim that for all $v_i$ there is $w_i\in N(v_i)\setminus\{v_{i-1}, v_{i+1}\}$, such that $b^{\ell}_{\M}(v_i, w_i)\geq \min\{i-1, k-i\}$. Consider the following cases:
\begin{itemize}
    \item[(i)] If $i-1<k-i$, then there is a vertex, $w_i$, aligned with $v_{i+1}$, such that $b^{\ell}_{\M}(v_i, w_i)\geq i-1$. Otherwise, $\M$ would not be balanced.
    \item[(ii)] If $k-i<i-1$, then there is a vertex, $w_i$, aligned with $v_{i-1}$, such that $b^{\ell}_{\M}(v_i, w_i)\geq k-i$. Otherwise, $\M$ would not be balanced.
    \item[(iii)] If $k-i=i-1$, then there must be a vertex, $w_i$, aligned with either $v_{i-1}$ or $v_{i+1}$, such that $b^{\ell}_{\M}(v_i, w_i)\geq k-i$. Otherwise, $\M$ would not be balanced.
\end{itemize}
Thus, consider $w_i\in N(v_i)\setminus\{v_{i-1}, v_{i+1}\}$ and the path $P'$ having as extreme vertices $v_i$ and a leaf $\ell\in T(v_i, w_i)$, which contains $w_i$ and has $d_i$ bends, where $d_i=\min\{i-1, k-i\}$. Walk through the vertices of $P'$ from $v_i$ to $\ell$ and let $w_1$ be the vertex in which $P'$ bends for the first time with respect to such a walk. Then, $d(w_1)\geq 3$. Let $w_1'$ be the vertex that immediately precedes $w_1$ in this walk over $P'$ and let $N(w_1)\setminus \{w_1'\}=\{w_{1j} \mid 1\leq j< d(w_1)\}$. Without loss of generality, assume that $w_{11}$ is aligned with $w_1'$. Since $w_1$ is a bend of $P'$, such a path does not follow through $T(w_1, w_{11})$. Without loss of generality, assume that $P'$ follows through $T(w_1, w_{12})$. Then, $b^{\ell}_{\M}(w_1, w_{12})= d_i-1$. Since $\M$ is balanced, $b^{\ell}_{\M}(w_1, w_{11})\geq d_i-1$. Thus, $|V(T(v_i, w_i)\setminus \{v_i\})|\geq |V(T(w_1', w_1)\setminus \{w_1'\})|\geq |V(T^{f}_{d_i})|$. Notice that $|V(T)|\geq |V(P)|+ \sum_{i=1}^{k} |V(T(v_i, w_i) \setminus \{v_i\} )|$ and $|V(P)|\geq k+2$. Therefore,
\begin{equation*}
    |V(T)| \geq (k+2) + \sum_{i=1}^{k} \left|V\left(T^{f}_{d_i}\right)\right| = |V(T_k)|\, .
\end{equation*}
\end{proof}
\begin{figure}[htb]
    \centering
    \subfigure[][]{\includegraphics[scale=0.3]{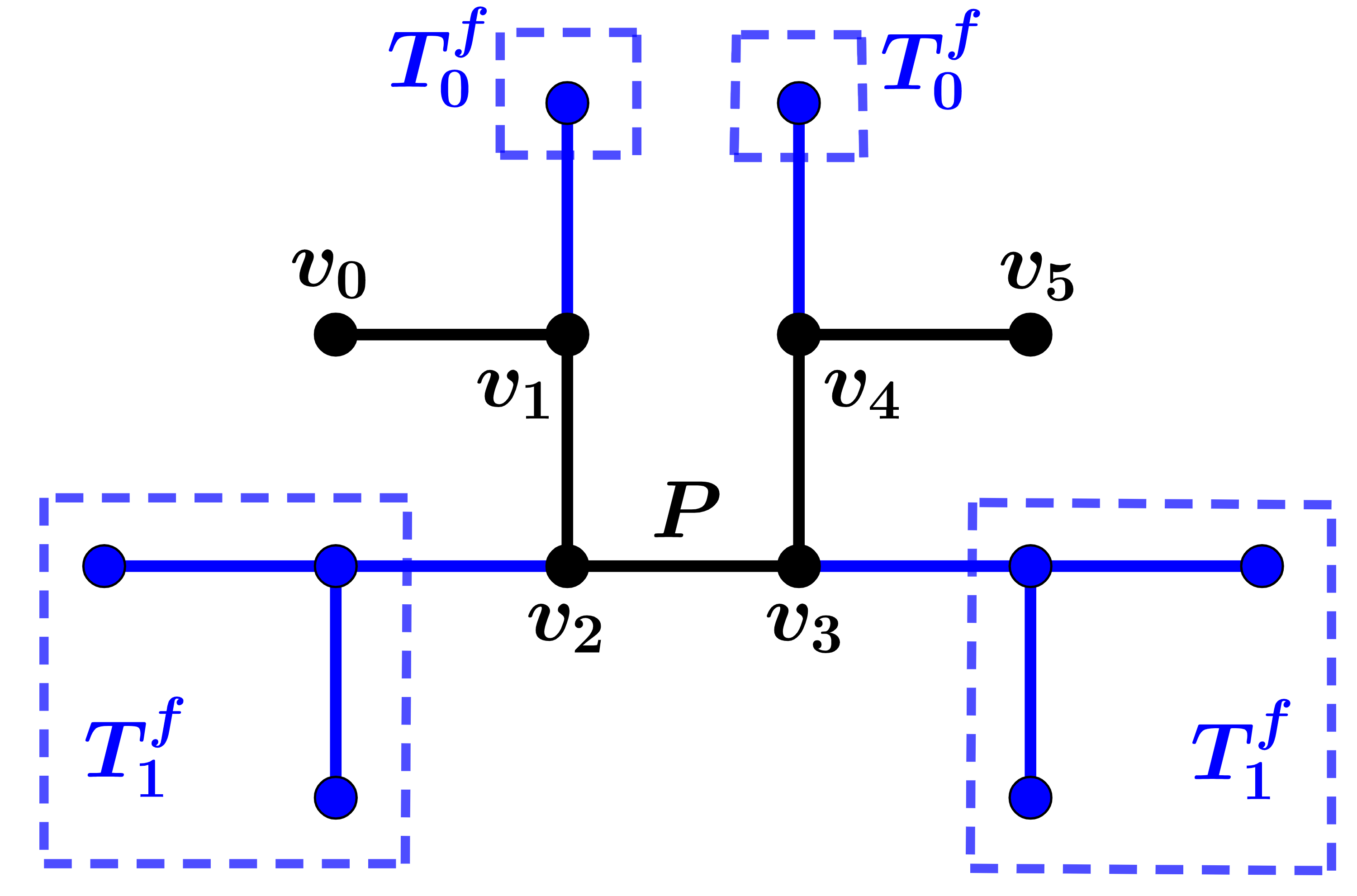} \label{fig:T_4} }
    \quad
    \subfigure[][]{\includegraphics[scale=0.3]{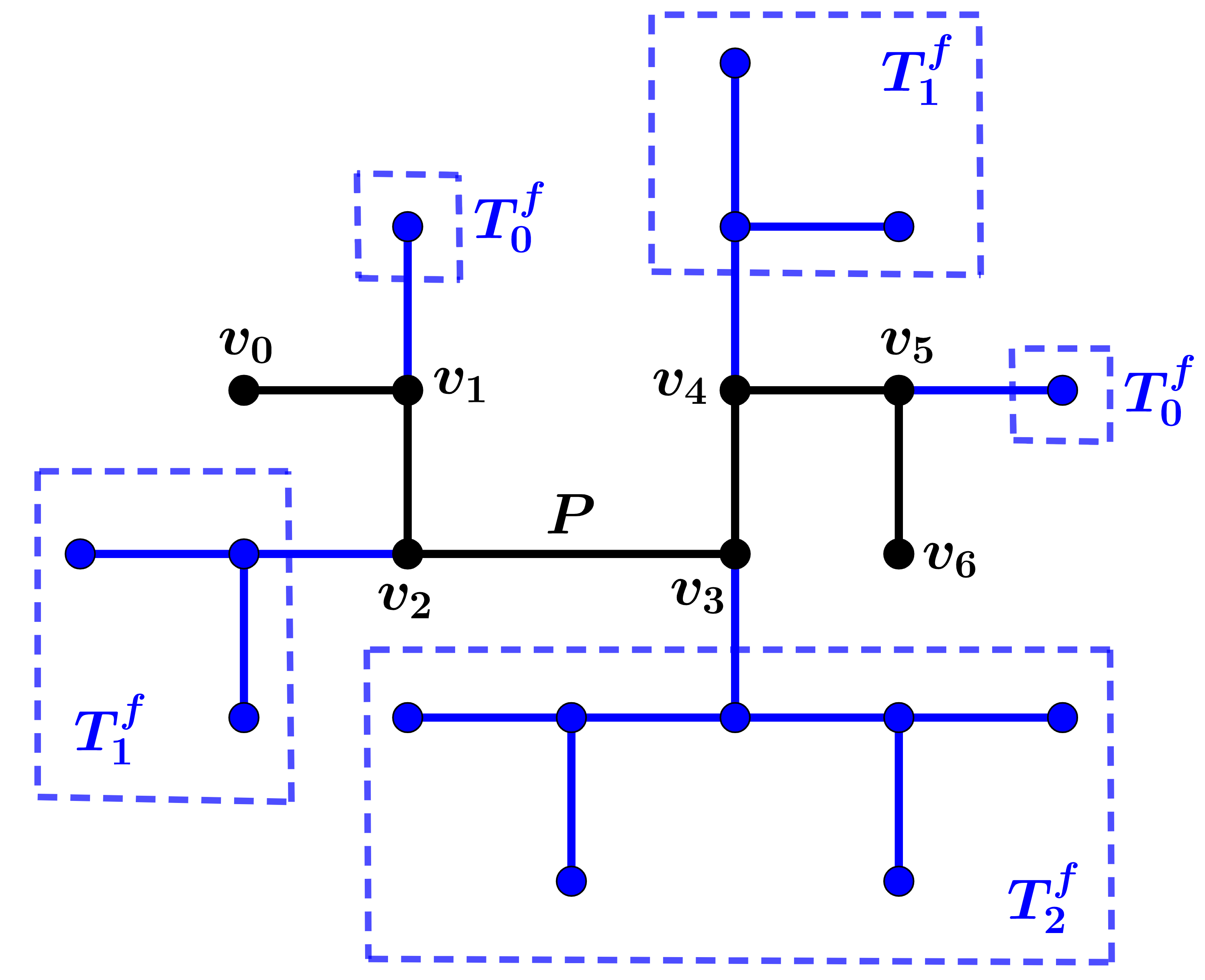} \label{fig:T_5}}
    \caption{Trees $T_4$ (left) and $T_5$ (right).}
    \label{fig:trees_Tk}
\end{figure}
\begin{thm}
If $T$ is a tree with $n$ vertices, then $b(T)\leq 2\cdot\log_{2}(n+2)-4$.
\end{thm}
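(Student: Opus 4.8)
The plan is to obtain this bound as an immediate arithmetic consequence of Theorem~\ref{teorema1}. Write $k = b(T)$ and $n = |V(T)|$. By Theorem~\ref{teorema1} we have $n \ge |V(T_k)|$, and we already have the closed form $|V(T_k)| = 2^{(k+4)/2}-2$ when $k$ is even and $|V(T_k)| = 3\cdot 2^{(k+1)/2}-2$ when $k$ is odd. So the whole task reduces to solving these inequalities for $k$.

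First I would do the even case: from $n \ge 2^{(k+4)/2}-2$ we get $n+2 \ge 2^{(k+4)/2}$, hence $\log_2(n+2) \ge (k+4)/2$, i.e. $k \le 2\log_2(n+2)-4$, as claimed. For the odd case the only additional ingredient is the numerical fact $3 \ge 2^{3/2} = 2\sqrt{2}$; it gives $3\cdot 2^{(k+1)/2} \ge 2^{3/2}\cdot 2^{(k+1)/2} = 2^{(k+4)/2}$, so again $n+2 \ge |V(T_k)|+2 \ge 2^{(k+4)/2}$ and the same manipulation yields $k \le 2\log_2(n+2)-4$. In particular the inequality $n+2 \ge 2^{(k+4)/2}$ holds regardless of the parity of $k$, so the two cases merge into a single estimate.

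The only point that deserves a separate glance is the degenerate value $k=0$, i.e. when $T$ is a path and Theorem~\ref{teorema1} only yields the trivial bound $n \ge 1$; there one simply checks directly that $b(T)=0 \le 2\log_2(n+2)-4$ holds as soon as $n \ge 2$. Apart from this, there is no real obstacle in the argument: everything substantive is already contained in Theorem~\ref{teorema1}, and what remains is the elementary algebra above together with the observation $3 > 2\sqrt{2}$, which is exactly what lets the odd case be handled by the same bound as the even one.
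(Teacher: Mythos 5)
Your proof is correct and follows essentially the same route as the paper's: both derive the bound from Theorem~\ref{teorema1} via the closed forms of $|V(T_k)|$, handle the even case directly, and absorb the odd case into the even bound using the numerical fact $3 > 2\sqrt{2}$ (the paper phrases this as $2\log_{2}(\frac{n+2}{4})\geq 2\log_{2}(\frac{n+2}{3\sqrt{2}})$, while you apply it at the level of the exponentials). Your side remark on $k=0$ touches a point the paper glosses over --- the closed form $2^{(k+4)/2}-2$ does not equal $|V(T_{0})|=1$, and the stated inequality actually fails for the one-vertex tree $n=1$ --- so your explicit check for $n\geq 2$ is a minor but genuine improvement in care.
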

\begin{proof}
Let $T$ be a tree with $n$ vertices. By Theorem~\ref{teorema1}, $n\geq |V(T_{b(T)})|$. Consider the following cases:
\begin{itemize}
    \item[(i)] If $b(T)$ is even,
    \begin{eqnarray*}
        n &\geq& 2^{\frac{b(T)+4}{2}}-2\\
        b(T) &\leq& 2\cdot\log_{2}(n+2)-4\, .
    \end{eqnarray*}
    \item[(ii)] If $b(T)$ is odd,
    \begin{eqnarray*}
        n &\geq& 3\cdot 2^{\frac{b(T)+1}{2}}-2\\
        b(T) &\leq& 2\cdot\log_{2}\left(\frac{n+2}{3}\right)-1\, .
    \end{eqnarray*}
\end{itemize}
Since $2\cdot\log_{2}(n+2)-4=2\cdot\log_{2}(\frac{n+2}{4})\geq 2\cdot\log_{2}(\frac{n+2}{3\sqrt{2}})=2\cdot\log_{2}(\frac{n+2}{3})-1$ for all $n>0$, then $b(T) \leq 2\cdot\log_{2}(n+2)-4$.
\end{proof}

\section{EPG models of VPT $\cap$ EPT graphs} \label{sec: EPG representations of VPT}

We provide an upper bound on the number of bends of an EPG model of VPT~$\cap$~EPT graphs. The VPT $\cap$ EPT graphs are proved to be those that can be represented in host trees with maximum degree at most $3$:
\begin{thm}[\cite{golumbic1985edge}]
Let $G$ be a graph. The following statements are equivalent:
\begin{itemize}
    \item[(i)] $G$ is both a VPT and an EPT graph.
    \item[(ii)] $G$ has VPT and EPT models on a tree with maximum degree $3$.
\end{itemize}
\end{thm}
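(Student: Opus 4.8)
This equivalence is the theorem proved in \cite{golumbic1985edge}; I sketch the approach one would take. The implication (ii) $\Rightarrow$ (i) is immediate from the definitions: a VPT (resp.\ EPT) model carried by a tree of maximum degree $3$ is in particular a VPT (resp.\ EPT) model, so $G$ is both a VPT and an EPT graph. All the work is in (i) $\Rightarrow$ (ii).

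For (i) $\Rightarrow$ (ii), the plan is to start from a VPT model $\langle T,\mathcal{P}\rangle$ of $G$ --- which exists because $G$ is a VPT graph --- and to reshape the host tree $T$, together with the family $\mathcal{P}$, into a host tree of maximum degree $3$ that simultaneously realizes $G$ as a VPT graph and as an EPT graph. Fix a vertex $v$ of $T$ with $d(v)\geq 4$, let $e_1,\ldots,e_{d(v)}$ be the edges of $T$ incident to $v$, and let $\mathcal{P}_v\subseteq\mathcal{P}$ be the paths passing through $v$; each $P\in\mathcal{P}_v$ either ends at $v$ (occupying one $e_i$) or traverses $v$ (occupying exactly two of the $e_i$). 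Since every two members of $\mathcal{P}_v$ share the vertex $v$, the set $\mathcal{P}_v$ induces a clique of $G = \mathrm{VPT}(\mathcal{P})$. Now invoke the second hypothesis: $G$ is also an EPT graph, so this clique must be realizable by pairwise edge-sharing. The key lemma to establish is that this forces a linear order on the edge-directions $e_1,\ldots,e_{d(v)}$ at $v$ for which the pair of edges occupied by each traversing path of $\mathcal{P}_v$ is an interval, and under which an \emph{expansion} of $v$ into a path $v=x_1,x_2,\ldots,x_m$ of vertices of degree at most $3$ (attaching each $e_i$ to the appropriate $x_j$ and rerouting each $P\in\mathcal{P}_v$ along the corresponding subpath of $x_1,\ldots,x_m$) changes neither the vertex-intersection graph nor the edge-intersection graph of $\mathcal{P}$. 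Iterating this expansion over all vertices of degree at least $4$ --- which strictly reduces the number and degrees of high-degree vertices, so the process terminates --- yields a host tree of maximum degree $3$ carrying both a VPT model and an EPT model of $G$.

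The main obstacle is the key lemma: proving that the clique $\mathcal{P}_v$, being simultaneously realizable as a clique in a VPT model (trivially, at $v$) and in an EPT model, admits the linearly ordered ``interval'' structure that a degree-$3$ expansion requires, and that the expansion creates no spurious adjacencies anywhere in the model, not merely locally at $v$. This is precisely where the combined hypothesis is indispensable: dropping it allows the configuration of four paths meeting only at the centre of a star $K_{1,4}$ whose edge-intersections form a $C_4$ rather than a $K_4$ --- an EPT model but not a VPT model of the same graph, which cannot be expanded to degree $3$ consistently. I would dispatch the lemma by a Helly-type argument on the edge-traces at $v$ (subpaths behaving like intervals, so that a clique in the EPT realization pins down the cyclic/linear order of the directions), followed by a short case analysis separating paths that end at $v$ from those that traverse it. Bounding the expansion so every $x_j$ retains degree at most $3$, and checking globally that no new intersections are introduced, are the points demanding the most care.
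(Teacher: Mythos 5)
There is nothing in the paper to compare against: the statement is quoted verbatim from Golumbic and Jamison \cite{golumbic1985edge} and the paper offers no proof of it, using it only as a black box to justify taking a host tree of maximum degree $3$ for VPT $\cap$ EPT graphs. So your proposal can only be judged against the known published argument, not against anything in this text.

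As a sketch, your (ii) $\Rightarrow$ (i) direction is fine and indeed trivial, but the (i) $\Rightarrow$ (ii) direction has a genuine gap exactly at what you call the key lemma, and the gap is conceptual, not just technical. Knowing that $G$ is EPT gives you an edge-intersection realization of the clique $\mathcal{P}_v$ in \emph{some} EPT model on a possibly unrelated host tree; you never explain how that abstract fact transfers back to constrain the geometry of the fixed VPT model at the degree-$\geq 4$ vertex $v$. In particular, in the Golumbic--Jamison analysis the maximal cliques of an EPT model are of two kinds, ``edge cliques'' and ``claw cliques''; a claw clique is pinned at a branch vertex and its paths leave in three directions, so it carries no linear order at all, and your Helly-type argument forcing the traversing paths at $v$ to occupy intervals of a linear order on $e_1,\ldots,e_{d(v)}$ does not follow as stated. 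The published proof takes a different route: VPT graphs are chordal, so a VPT $\cap$ EPT graph is a chordal EPT graph, and Golumbic--Jamison show (via the edge-clique/claw-clique dichotomy) that chordal EPT graphs are exactly those admitting an EPT representation on a tree of maximum degree $3$; on such a tree the same family of paths is then massaged into simultaneous VPT and EPT models. If you want to salvage your local-expansion plan, the missing step you must supply is precisely why the combined hypothesis rules out, at every vertex of degree $4$, a set of paths pairwise meeting only at that vertex whose required clique cannot be re-realized after splitting --- and that is essentially the content of the chordality/claw-clique argument you would be re-deriving.
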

In \cite{alcon2015characterizing}, this class is characterized by a family of minimal forbidden induced subgraphs. In \cite{alcon2020b_1}, the authors showed that every Chordal B$_1$-EPG graph is a VPT~$\cap$~EPT graph.

Let $G$ be a $VPT\cap EPT$ graph and $\langle T, \mathcal{P} \rangle$ a VPT model of $G$ in which $\Delta(T)\leq3$. Consider $V(G)=\{v_1, v_2,\ldots, v_n\}$, $V(T)=\{u_1, u_2,\ldots, u_m\}$ and $\mathcal{P}=\{Q_i\mid 1\leq i\leq n\}$. Build an EPG model $\mathcal{R} = \{P_i \mid 1\leq i\leq n\}$ of $G$ in a grid $\mathcal{G}$ in the following way.

First, let $\mathcal{M}$ be a model of $T$ with the minimum number of bends on the grid $\mathcal{G}$, as described in Section~\ref{sec:embedding trees}.
For all edges $e_i\in E(T)$ in $\mathcal{M}$, let $e'_i$ be their midpoints in grid $\mathcal{G}$.
For all $u_i\in V(T)$ such that $d(u_i)=1$, build an auxiliary path, $P'_{u_i}$, going from $u_i$ to~$e'$, where~$e$ is the edge to which $u_i$ is incident.
For all $u_i\in V(T)$ such that $d(u_i)=2$, let $e_1$ and~$e_2$ be the edges incident to $u_i$. Build an auxiliary path  $P'_{u_i}$ having $e'_1$ and $e'_2$ as endpoints.
For all $u_i\in V(T)$ such that $d(u_i)=3$, let $e_1$, $e_2$ and $e_3$ be the edges incident to $u_i$. Note that, at least one of them is vertical and at least one of them is horizontal. Without loss of generality, assume $e_1$ is vertical and $e_2$ is horizontal. Build an auxiliary path  $P'_{u_i}$ having~$e'_1$ and $e'_2$ as endpoints.
For all $Q_i\in\mathcal{P}$, let $u_i$ be an endpoint of $Q_i$. Initialize $P_i$ to be coincident to $Q_i$. Next, consider the following cases:
    \begin{itemize}
        \item[-] if $d(u_i)=2$, enlarge $P_i$ by stretching its endpoint so that it coincides with the endpoint of $P'_{u_i}$ that does not belong to $P_i$ yet.
        \item[-] If $d(u_i)=3$ and $P_i\cap P'_{u_i}=\{u_i\}$, enlarge $P_i$ by stretching its endpoint so that it coincides with the endpoint of $P'_{u_i}$ which does not impose a new bend in $P_i$.
        \item[-] If $d(u_i)=3$ and $P_i\cap P'_{u_i}\neq\{u_i\}$, it implies that $u_i$ is an endpoint of $P_i$ and $P_i$ already contains one of the endpoints of $P'_{u_i}$. In that case, enlarge $P_i$ by stretching its endpoint so that it coincides with the other endpoint of $P'_{u_i}$.
    \end{itemize}
Remove the paths $P'_{u_i}$ for all $1\leq i\leq m$. We will call such a construction a \emph{VPT-EPG transformation}. An important property is that it yields a B$_k$-EPG model of $G$ with $k\leq b(T)$. Indeed, note that if a path $Q_i$ with $u_i$ as an extreme vertex has $b(T)$ bends, then $d(u_i)\leq 2$. Therefore, either $P_i = Q_i$ or $P_i$ is $Q_i$ with their extreme vertices stretched without any new bends. Thus, $P_i$ has $b(T)$ bends and, therefore, $\mathcal{R}$ has a maximum of $b(T)$ bends in any of its paths. Figure~\ref{fig:epg_example_2} presents an EPG model $\mathcal{R} = \{P_i \mid 1\leq i\leq 10\}$ derived for the family $\mathcal{P}=\{Q_i\mid 1\leq i\leq 10\}$ of Figure~\ref{fig:epg_example_1}.
\begin{figure}[htb]
    \centering
    \subfigure[][]{\includegraphics[scale=1]{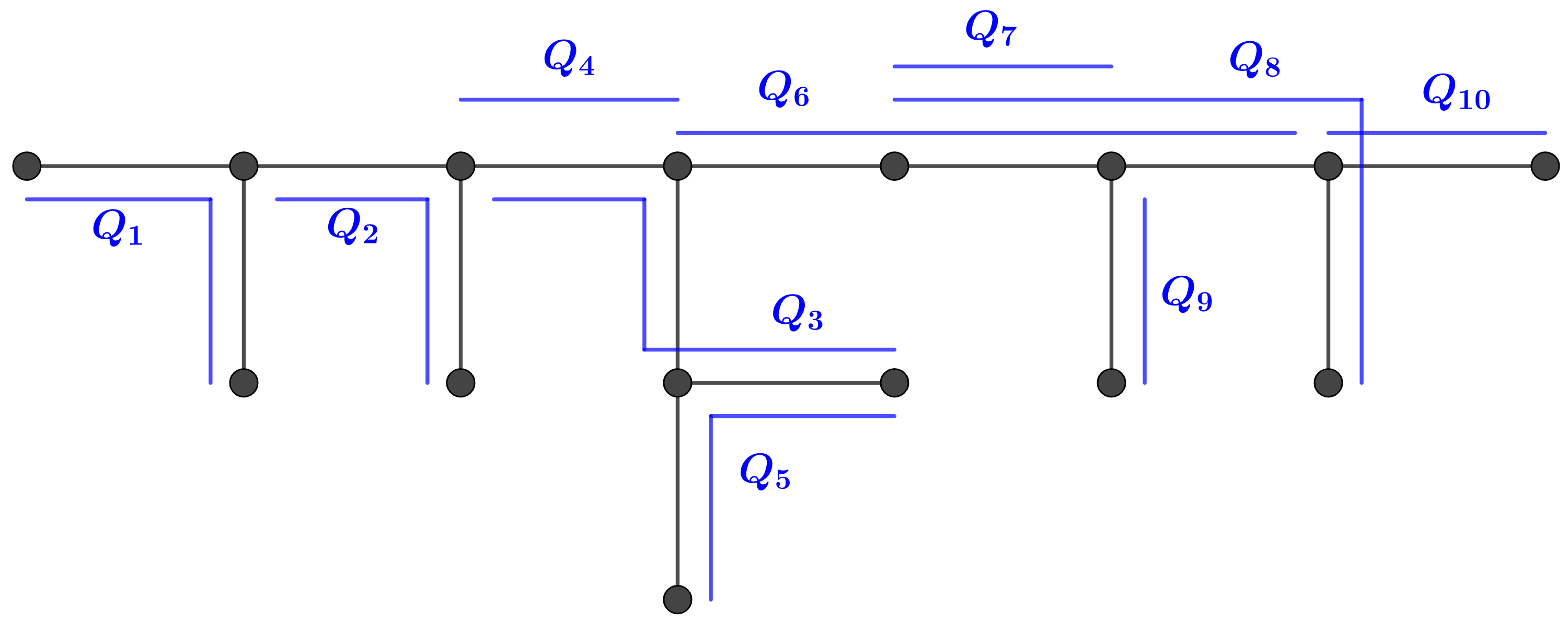} \label{fig:epg_example_1}}
    \quad
    \subfigure[][]{\includegraphics[scale=1]{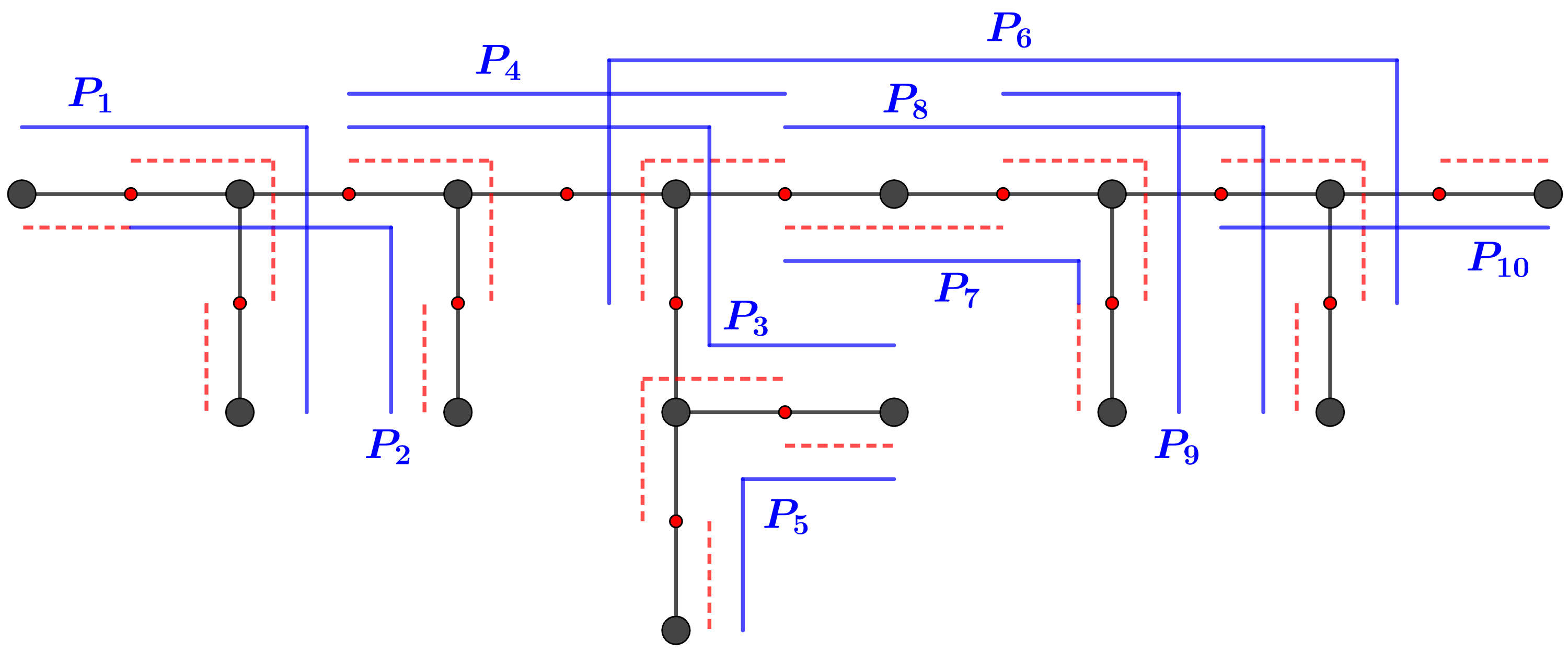} \label{fig:epg_example_2}}
    \caption{Construction of a B$_k$-EPG model with $k\leq  b(T)$.}
    \label{fig:epg_example}
\end{figure}

It is possible to show that, for some examples, the upper bound is tight. However, it may be arbitrarily far from the actual value, as we show next. Consider in $T_k$ the path $P=v_0, v_1,\ldots, v_k, v_{k+1}$ with $k$ bends. Therefore, $v_1,\ldots,v_k$ are the vertices in which $P$ bends. Let $w_i\in N(v_i)\setminus\{v_{i-1}, v_{i+1}\}$. Let $\mathcal{P}_i=\{P_{\ell}=v_i,\ldots,\ell\mid\text{$\ell$ is a leaf of $T(v_i, w_i)$}\}$. Consider the family of paths $\mathcal{P}_k=\bigcup^{k}_{i=1}\mathcal{P}_i\cup P$ and the VPT graph, $G$, having $\langle T_k, \mathcal{P}_k \rangle$ as a model. Note that by applying a VPT-EPG transformation in $\langle T_k, \mathcal{P}_k \rangle$, the resulting model has $k$ bends, since it contains $P$. On the other hand, notice that $G[\mathcal{P}_i]$ is a clique and $P$ is such that the vertex representing it in $G$ is universal. Thus, since it is possible to build an interval model of $G$, where $(0,k)$ is the interval representing $P$ and each element of $\mathcal{P}_i$ is represented by the interval $(i-1,i-\epsilon)$, for some $\epsilon\in\Re$, with $0<\epsilon<1$, $G$ is B$_0$-EPG. See in Figure~\ref{fig:interval_model} an example of such a construction having $T_4$ as host tree.
\begin{figure}[htb]
    \centering
    \subfigure[][]{\includegraphics[scale=0.4]{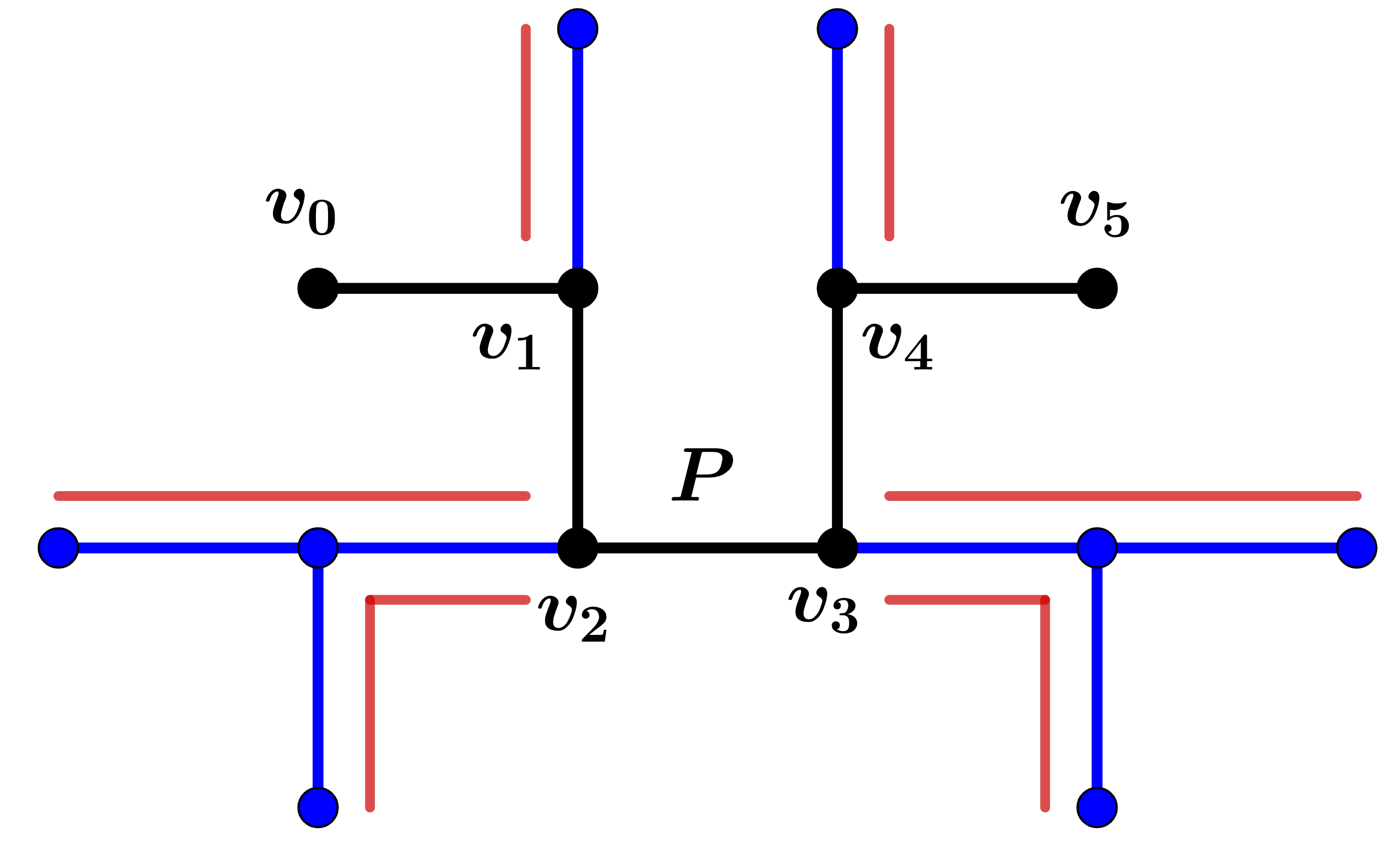} \label{fig:interval_model_0}}
    \quad
    \subfigure[][]{\includegraphics[scale=0.4]{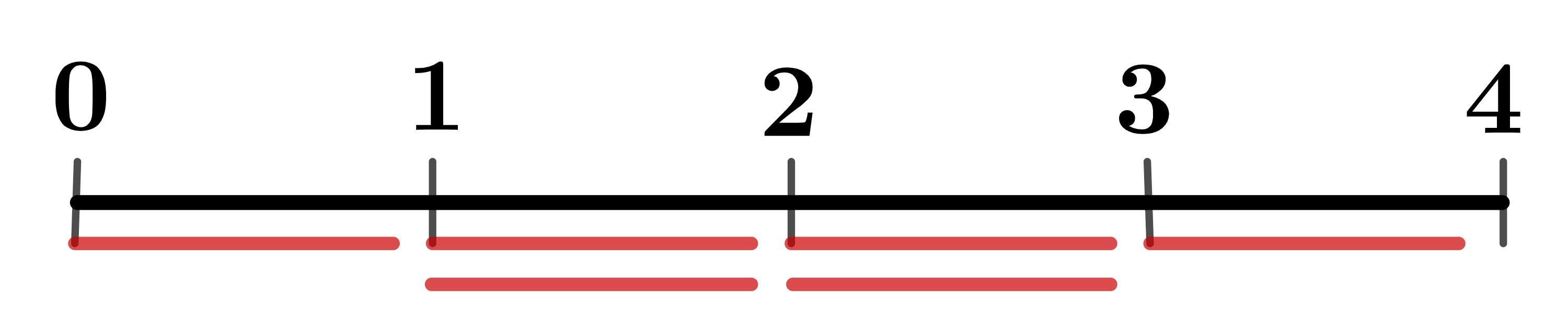} \label{fig:interval_model_1}}
    \caption{The VPT model $\mathcal{P}_4$ (left) and its respective interval model (right).}
    \label{fig:interval_model}
\end{figure}

\section{Conclusion} \label{sec: Conclusion}

In this paper, we presented an algorithm to embed a tree $T$ with $\Delta(T)\leq4$ in a rectangular grid, such that the maximum number of bends over all paths of $T$ is minimized. We also described how to construct s-models with $k$ bends, having the least number of vertices possible for any $k\geq 0$. Moreover, the s-models produced by our algorithm were employed to construct EPG models of VPT $\cap$ EPT graphs providing an upper bound on the number of bends of such graphs. We remark that there are some examples showing that the upper bound is tight.

\bibliographystyle{unsrt}


\end{document}